\documentclass[12pt, reqno]{amsart}
\usepackage{amsmath, amsthm, amscd, amsfonts, amssymb, graphicx, color, mathrsfs}
\usepackage[bookmarksnumbered, colorlinks, plainpages]{hyperref}
\hypersetup{colorlinks=true,linkcolor=red, anchorcolor=green, citecolor=cyan, urlcolor=red, filecolor=magenta, pdftoolbar=true}
\usepackage{hyperref}
\newtheorem{theorem}{Theorem}[section]
\newtheorem{lemma}[theorem]{Lemma}

\newtheorem{corollary}[theorem]{Corollary}
\theoremstyle{definition}

\theoremstyle{remark}
\newtheorem{remark}[theorem]{Remark}
\numberwithin{equation}{section}

\begin{document}

\setcounter{page}{1}

\title[Inverse problem for a multi-term time-fractional diffusion equation \dots ]{Inverse problem for a multi-term time-fractional diffusion equation with the
Caputo derivatives
}

\author[R. Ashurov and  D. Shamuratov]{Ravshan Ashurov and Damir Shamuratov}

\address{\textcolor[rgb]{0.00,0.00,0.84}{Ravshan Ashurov:
V. I. Romanovskiy Institute of Mathematics Uzbekistan Academy of Sciences,
University street 9, Tashkent--100174,
Uzbekistan
}}
\email{\textcolor[rgb]{0.00,0.00,0.84}{ashurovr@gmail.com}}

\address{\textcolor[rgb]{0.00,0.00,0.84}{Damir Shamuratov:
V. I. Romanovskiy Institute of Mathematics Uzbekistan Academy of Sciences,
University street 9, Tashkent--100174,
Uzbekistan}}
\email{\textcolor[rgb]{0.00,0.00,0.84}{damirshamuratov5@gmail.com}}

\thanks{All authors contributed equally to the manuscript and read and approved the final manuscript.}

\subjclass[2010]{35R30, 35R11, 26A33, 33E12, 35K05 }

\keywords{inverse source problem, multi-term time-fractional diffusion equation, Caputo fractional derivative, multinomial Mittag–Leffler functions, asymptotic behaviour, Fourier method, existence and uniqueness.}

\begin{abstract} This paper investigates an inverse source problem for a multi-term time-fractional diffusion equation with Caputo derivatives. The source term is separable as \(f(x)g(t)\), with the unknown spatial component \(f(x)\) reconstructed from an overdetermination condition at interior time \(t_0 \in (0, T]\). The elliptic part is governed by a self-adjoint positive differential operator \(A(x, D)\) of order \(m \ge 2\). The solution features a spectral representation using the multinomial Mittag-Leffler function, for which we derive novel precise asymptotic expansions. These asymptotics provide a uniform lower bound for the solution's characteristic denominator, enabling sufficient conditions for the existence of a classical solution. Uniqueness of the reconstructed source holds under natural assumptions on the data and \(g(t)\). Despite the problem's ill-posedness, high-regularity classical solutions are achievable under suitable structural conditions.
\end{abstract} \maketitle
\tableofcontents

\section{Introduction}

Over the past few decades, fractional calculus has emerged as a powerful tool for modeling complex phenomena in natural and engineered systems. It has found applications across mathematics, physics, biology, finance, chemistry, and beyond \cite{1}, particularly for processes involving nonlocal interactions and memory effects \cite{2}–\cite{6}.

To capture even richer dynamics, multi-term time-fractional differential equations have been introduced. These incorporate multiple Caputo fractional derivatives of different orders, offering a more flexible framework than single-term models. For example, two-term fractional diffusion equations have been used to distinguish mobile and immobile phases in solute transport \cite{7},\cite{8}, while models with differing orders describe subdiffusive motion in velocity fields \cite{9},\cite{10}. Such formulations are especially effective for anomalous diffusion in heterogeneous media and complex systems, and they also facilitate numerical treatment of distributed-order derivatives.

Multi-term time-fractional diffusion equations have attracted significant attention in recent years. Numerical studies and approximations of strong solutions appear in \cite{11}, where the authors consider

$$
\sum_{j=1}^{r} \partial_t^{\rho_j} u(x,t) = -(-\Delta)^{\alpha/2} u(x,t) + q(x) f(t), \quad x \in \Omega, \ t \in (0,T],
$$
with the fractional Laplacian defined via spectral decomposition of \(-\Delta\) for \(1 < \alpha \le 2\). In the inverse setting, they recover the time-dependent source \(f(t)\) from interior measurement data \(u(x_0,t) = \phi(t)\). Note that all fractional derivatives in \cite{11} and throughout the present paper are understood in the Caputo sense (the precise definition is given below).

Galerkin-based numerical solutions for similar multi-term equations are developed in \cite{12}. A strong maximum principle for multi-term Caputo derivatives, combined with fractional Duhamel's principle, is established in \cite{13} and applied to recover time-dependent sources in problems of the form

$$
\sum_{j=1}^{m} a_j \partial_t^{\rho_j} u(x,t) + A u(x,t) = \rho(t) g(x), \quad x \in \Omega, \ 0 < t \le T,
$$
with homogeneous initial and Dirichlet boundary conditions, where \(A\) is a uniformly elliptic operator of second order.

Further works address parameter estimation \cite{14}, recovery of time-dependent coefficients from integral observations \cite{15}, Cauchy data \cite{16}, nonlocal overdetermination \cite{17}, and nonhomogeneous boundary conditions with partial boundary measurements \cite{18} using regularization techniques such as Levenberg-Marquardt.

Recall that the fractional derivatives $\partial_t^{\rho_j}$ in the Caputo sense are defined as follows:
\[
\partial_t^{\rho_j} u(x,t)
=
\frac{1}{\Gamma(1-\rho_j)}
\int_0^t
\frac{\partial u(x,s)}{\partial s}
\frac{ds}{(t-s)^{\rho_j}},
\qquad 0<\rho_j<1,\; t>0,
\]
provided that the right-hand side exists. Here $\Gamma$ denotes the Gamma function.

Most existing studies on inverse source problems for multi-term time-fractional diffusion equations focus on recovering time-dependent source terms and rely on specific spatial operators (classical Laplacian or its fractional powers). In contrast, the present paper investigates the inverse problem of identifying the spatially dependent source component \(f(x)\) in a separable source \(f(x)g(t)\), using a single overdetermination condition at a fixed interior time \(t_0 \in (0,T]\).

We consider the classical solution of the following initial-boundary value inverse problem:
\begin{equation}\label{1.1}
 \sum_{j=1}^M q_j  \partial_t^{\rho_j} u(x,t) + A(x,D)u(x,t) = f(x)g(t), \quad x \in \Omega, \; 0 < t \le T.
\end{equation}
\begin{equation}\label{1.2}
    u(x,0) = \varphi(x), \quad x \in \Omega,
\end{equation}
\begin{equation}\label{1.3}
B_j u(x,t) = \sum_{|\alpha|\le m_j} b_{\alpha,j}(x) D^\alpha u(x,t) = 0,\end{equation}
 $$ 0 \le m_j \le m-1, \quad j=1,\dots,l, \quad x \in \partial \Omega, \; 0 < t \le T. $$

\begin{equation}\label{1.4}
    u(x, t_0) = \Psi(x), \quad x \in \Omega,
\end{equation}
where $t_0$ is a given fixed point of the segment $(0, T]$.  \(A(x,D)\) is a positive, formally self-adjoint elliptic differential operator of even order \(m = 2l \ge 2\) with smooth coefficients, the orders satisfy \(0 < \rho_M < \cdots < \rho_1 < 1\), \(q_j > 0\), $j=1,\dots.M$ (with \(q_1 = 1\) without loss of generality), and all given functions are sufficiently smooth.

The main contribution is the construction of classical solutions via eigenfunction expansions, combined with a detailed asymptotic analysis of the multinomial Mittag-Leffler function--including precise expansions derived here for the first time. These asymptotics provide uniform lower bounds on the characteristic denominators, enabling rigorous proofs of existence and uniqueness for classical solutions of high regularity, despite the inherent ill-posedness of the inverse problem.

The paper is organized as follows. Preliminaries are given in Section \ref{sec:2}. Asymptotic behavior of the multinomial Mittag-Leffler function is derived in Section \ref{sec:3}. In Section \ref{sec:4}, we establish lower estimates for the term occurring in the denominator of the solution. The results are obtained by analyzing
the sign-changing or sign-preserving nature of the function $g(t)$, together with
the asymptotic behavior of the multinomial Mittag–Leffler function proved in
Section \ref{sec:3}. In Section \ref{sec:5}, the Fourier method is applied to construct a classical solution to the problem, and the existence of the solution is rigorously established. In Section \ref{sec:6}, the uniqueness of the solution to the inverse problem is investigated. It is shown that uniqueness may fail if certain Fourier coefficients vanish, and necessary and sufficient solvability conditions in the case of non-uniqueness are derived. In Section \ref{sec:7}, the problem is studied on the N-dimensional torus. Finally, we give a conclusion in Section \ref{sec:8}.

\section{Preliminaries}
\label{sec:2}
In solving the inverse problem \eqref{1.1}--\eqref{1.4}, we employ the Fourier method.

Applying the Fourier method to the inverse problem \eqref{1.1}--\eqref{1.4} leads to the spectral problem
\begin{equation}\label{2.1}
    A(x, D)v(x) = \lambda v(x), \quad x \in \Omega,
\end{equation}
\begin{equation}\label{2.2}
    B_j v(x) = 0, \quad j = 1, \dots, l, \quad x \in \partial \Omega.
\end{equation}

The paper \cite{19},\cite{20} establishes sufficient conditions on the boundary $\partial \Omega$ of the domain $\Omega$ 
and the coefficients of the operators $A$ and $B_j$ ensuring the compactness of the corresponding inverse operator 
or, equivalently, the existence of a complete $L_2(\Omega)$-orthonormal system of eigenfunctions $\{v_k(x)\}$ 
and a countable set of positive eigenvalues $\lambda_k$ $(0 < \lambda_1 \le \lambda_2 \le \cdots \le \lambda_k\le \cdots
\to\infty)$ of problem \eqref{2.1}--\eqref{2.2}.

The following conditions ensure that the spectral problem has a discrete spectrum of positive eigenvalues 
and a complete $L_2(\Omega)$ orthonormal system of eigenfunctions \cite{19}:

\begin{enumerate}
    \item \textbf{Strong ellipticity:} the principal symbol of $A$ satisfies
    \[
        \sum_{|\alpha|=m} a_\alpha(x) \xi^\alpha \ge c |\xi|^m, \quad \forall \xi \in \mathbb{R}^N, \ x \in \bar{\Omega}, \ c>0.
    \]
    \item \textbf{Self-adjointness:} the top-order coefficients are real and symmetric, i.e.,
    $a_\alpha(x) = \overline{a_\alpha(x)}$ for $|\alpha|=m$.

    \item \textbf{Positivity:} 
        $(Av,v)_{L^2(\Omega)} \ge c \|v\|_{H^l(\Omega)}^2, \quad c>0.$
    \item \textbf{Complementing boundary conditions (Lopatinskii--Shapiro condition):} 
    each boundary operator $B_j$ must be compatible with the ellipticity of $A$, 
    preventing nontrivial decaying solutions of the associated half-space problem.
    Typical examples include:
    \begin{itemize}
 \item \textbf{Boundary conditions:}
    \item $m=2$: Dirichlet ($v=0$), Neumann ($\partial_\nu v=0$), Robin ($a v + b \partial_\nu v=0$);
    
    \item $m=4$ (biharmonic): clamped ($v=0,\ \partial_\nu v=0$), simply supported ($v=0,\ \Delta v=0$).
    
    \item $\forall\, m=2l$: \quad
    $\partial_\nu^{k} v\big|_{\partial\Omega}=0, 
    \qquad k=0,\dots,l-1.$
\end{itemize}

\item \textbf{Smoothness:} 
$\partial \Omega \in C^{2l}$, 
$a_\alpha(x) \in C^{m}(\overline{\Omega})$, 
and $b_{\alpha,j}(x) \in C^{m_j}(\partial \Omega)$.
\end{enumerate}

 Let $\tau$ be an arbitrary real number. In the space $L_{2}(\Omega)$, we introduce the operator
$\widehat{A}^{\tau}$ acting by the rule 
\[
\widehat{A}^{\tau} h(x)
=
\sum_{k=1}^{\infty} \lambda_k^{\tau} h_k v_k(x),
\qquad
h_k = (h, v_k).
\]

Obviously, the operator $\widehat{A}^{\tau}$ with domain
\[
D(\widehat{A}^{\tau})
=
\left\{
h \in L_{2}(\Omega) :
\sum_{k=1}^{\infty} \lambda_k^{2\tau} |h_k|^{2} < \infty
\right\},
\]
is self-adjoint. If $A$ is the operator on $L_{2}(\Omega)$ acting by the formula
$
Ah(x) = A(x,D)h(x)
$
with domain
$
D(A)
=
\left\{
h \in C^{m}(\Omega) :
B_j h(x) = 0,\; j=1,\dots,l,\; x \in \partial\Omega
\right\},
$
then the operator $\widehat{A} \equiv \widehat{A}^{1}$ is a self-adjoint extension of $A$ in
$L_{2}(\Omega)$ .

We introduce the following lemma, which guarantees the continuity of the operator 
$D^{\alpha}\widehat{A}^{-\sigma}$ from $L^{2}(\Omega)$ into $C(\Omega)$. 
This result allows us to estimate the corresponding Fourier series in the 
$C(\Omega)$-norm and justifies the term-by-term application of the operator 
$D^{\alpha}$.

\begin{lemma}\label{lem1} \cite{20}
Let
$
\sigma > \frac{|\alpha|}{m} + \frac{N}{2m}.
$
Then the operator $D^{\alpha}\widehat{A}^{-\sigma}$ acts (completely) continuously
from $L_{2}(\Omega)$ into $C(\Omega)$, and the estimate
\[
\| D^{\alpha}\widehat{A}^{-\sigma} h \|_{C(\Omega)}
\le
C \| h \|_{L^{2}(\Omega)},
\]
holds.
\end{lemma}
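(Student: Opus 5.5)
The plan is to factor $D^{\alpha}\widehat{A}^{-\sigma}$ through a Sobolev space and then use two classical ingredients: elliptic regularity for the boundary value problem \eqref{2.1}--\eqref{2.2}, and the Sobolev embedding theorem. Write $\sigma = \frac{s}{m}$, so that $s = m\sigma > |\alpha| + \frac{N}{2}$.

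\textbf{Step 1.} First I would show that $D(\widehat{A}^{\sigma})$ embeds continuously into $H^{s}(\Omega)$, with
\[
\|h\|_{H^{s}(\Omega)}\le C\|\widehat{A}^{\sigma}h\|_{L_2(\Omega)}.
\]
\begin{itemize}
\item For integer $\sigma=k$ this is obtained by iterating the a priori estimate of Agmon--Douglis--Nirenberg,
\[
\|v\|_{H^{j+m}}\le C\bigl(\|Av\|_{H^{j}}+\|v\|_{L_2}\bigr),
\]
valid for $v$ satisfying the complementing conditions $B_jv=0$. This is where the smoothness of $\partial\Omega$ and of the coefficients (conditions 1--5) is used. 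Positivity, $\lambda_1>0$, lets us drop the $\|v\|_{L_2}$ term.
\item Since $\widehat{A}^{-k}$ maps $L_2$ boundedly into $H^{km}$, and $\widehat{A}^{0}=I$ maps $L_2$ into $H^0$, complex interpolation handles non-integer $\sigma$. The domains $D(\widehat{A}^{\theta})$ form an interpolation scale because $\widehat A$ is self-adjoint and positive, so $\widehat{A}^{-\sigma}\colon L_2\to H^{m\sigma}$ is bounded for all $\sigma\ge 0$.
\end{itemize}

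\textbf{Step 2.} Next I would apply $D^{\alpha}$, which maps $H^{s}$ boundedly into $H^{s-|\alpha|}$. Since $s-|\alpha|>N/2$ and $\Omega$ is a bounded domain with $C^{2l}$ boundary (hence the cone condition holds), the Sobolev embedding gives $H^{s-|\alpha|}(\Omega)\hookrightarrow C(\overline\Omega)$. Composing the two steps yields
\[
\|D^{\alpha}\widehat{A}^{-\sigma}h\|_{C}\le C\|h\|_{L_2}.
\]

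\textbf{Step 3.} For complete continuity, choose $\sigma'$ with $\frac{|\alpha|}{m}+\frac{N}{2m}<\sigma'<\sigma$ and write
\[
D^{\alpha}\widehat{A}^{-\sigma}=\bigl(D^{\alpha}\widehat{A}^{-\sigma'}\bigr)\,\widehat{A}^{-(\sigma-\sigma')}.
\]
The factor $\widehat{A}^{-(\sigma-\sigma')}$ is compact on $L_2$, being diagonal with entries $\lambda_k^{-(\sigma-\sigma')}\to0$. The first factor is bounded by Steps 1--2, so the composition is compact.

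\textbf{Main obstacle.} The delicate point is Step 1 at high orders when $m\sigma$ exceeds the regularity granted by the smoothness assumptions. Here I would restrict $\sigma$ to the range allowed by the coefficient and boundary smoothness, as in \cite{20}. A second subtlety is identifying $D(\widehat A^{k})$ with functions satisfying the boundary conditions for $A^{j}v$, $j<k$, which is needed to iterate the a priori estimate. I would handle this by working with the series $\sum\lambda_k^{-\sigma}h_kv_k$ directly: each $v_k$ is smooth by elliptic regularity, and the estimate passes to the limit of partial sums.
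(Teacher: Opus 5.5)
The paper gives no proof of this lemma. It is quoted from \cite{20} and used as a black box, so there is no internal argument to compare with step by step.

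Your argument is correct and is a standard, self-contained route. Iterating the Agmon--Douglis--Nirenberg estimate on finite combinations of eigenfunctions gives $\|h\|_{H^{km}}\le C\|\widehat{A}^{k}h\|_{L_2}$. Passing to general $h\in D(\widehat{A}^{k})$ is legitimate, because the partial sums converge in the graph norm of $\widehat{A}^{k}$ and are therefore Cauchy in $H^{km}$. Your interpolation step is most cleanly phrased as interpolating the bounded inclusion $D(\widehat{A}^{k})\hookrightarrow H^{km}(\Omega)$ against $L_2\hookrightarrow L_2$, using $[L_2,D(\widehat{A}^{k})]_\theta=D(\widehat{A}^{k\theta})$ for a positive self-adjoint operator. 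Then $D^\alpha$, the embedding $H^{t}(\Omega)\hookrightarrow C(\overline{\Omega})$ for $t>N/2$, and your factorization through the compact diagonal operator $\widehat{A}^{-(\sigma-\sigma')}$ finish the proof.

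A common alternative treats $\widehat{A}^{-\sigma}$ as an integral operator with potential-type kernel bounds $|D^\alpha_x G_\sigma(x,y)|\le C|x-y|^{m\sigma-N-|\alpha|}$ (for $m\sigma<N+|\alpha|$). There the threshold $m\sigma-|\alpha|>N/2$ is exactly square integrability of the kernel in $y$, uniformly in $x$. That route requires Green's function estimates but gives $L_p$ versions at no extra cost; yours uses only $L_2$ elliptic regularity and Hilbert-space interpolation.

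One point in your ``main obstacle'' paragraph needs correcting: restricting $\sigma$ is not the right remedy. Large $\sigma$ never causes trouble, since the bound for some $\sigma'$ implies it for every $\sigma>\sigma'$ after composing with the bounded operator $\widehat{A}^{-(\sigma-\sigma')}$. The real difficulty is reaching $\sigma$ just above the threshold, which forces $D(\widehat{A}^{\sigma})\subset H^{m\sigma}$ with $m\sigma>|\alpha|+N/2$.

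The smoothness listed in Section~\ref{sec:2} ($\partial\Omega\in C^{2l}$, $a_\alpha\in C^{m}$) only guarantees $D(\widehat{A})\subset H^{m}$. Yet the paper uses the lemma with $|\alpha|=m$ and $\sigma=\tau+1$, i.e.\ with $m\sigma>m+N/2$. So your argument requires $\partial\Omega$ of class $C^{km}$, and correspondingly smooth coefficients, for some integer $k$ with $km>|\alpha|+N/2$. The lemma has to be read with a smoothness hypothesis of this kind, which is what the paper's phrase ``smooth coefficients'' must mean. With that hypothesis stated explicitly, the eigenfunctions lie in $H^{km}$ as your partial-sum argument requires, and the proof is complete.
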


The solution of problem \eqref{1.1}–\eqref{1.4} involves the multivariable Mittag--Leffler function. Therefore, we first present the definition of the multivariable Mittag--Leffler function.

The multinomial Mittag--Leffler function is defined as \cite{21}
$$E_{(\beta_1,\dots,\beta_M),\beta_0}(z_1,\dots,z_M)
=
\sum_{k=0}^\infty
\sum_{k_1+\dots+k_M=k}
\binom{k}{k_1,\dots,k_M}
\frac{\prod_{j=1}^M z_j^{k_j}}{\Gamma\ \left(\beta_0+\sum_{j=1}^M \beta_j k_j\right)},$$
where we assume \(\beta_0 > 0\),\, \(0 < \beta_j < 1\), \(z_j \in \mathbb{C}\) (\(j = 1, \dots, M\)), and \(\binom{k}{k_1,\dots,k_M}\) denotes
the multinomial coefficient
\[
\binom{k}{k_1,\dots,k_M} := \frac{k!}{k_1! \cdots k_M!},\quad
\text{with} \quad
k = \sum_{j=1}^{M} k_j,
\]
where \(k_j\), \(1 \le j \le M\), are non--negative integers.

To derive an upper bound for the multinomial Mittag–Leffler function and to prove the convergence of the solution, we recall the following lemmas.
\begin{lemma} \label{lem2}\cite{22} Let $0<\beta<2$ and
$1>\rho_1>\cdots>\rho_M>0$ be given. Assume that $\frac{\rho_1\pi}{2}<\mu<\rho_1\pi,\,\,
\mu \le |\arg(z_1)| \le \pi,
$
and that there exists a constant $K>0$ such that
$
-K \le z_j < 0, j=2,\dots,M.
$
Then there exists a constant $C>0$, depending only on $\mu$, $K$, $\rho_j$ $(j=1,\dots,M)$
and $\beta$, such that
\[
\left|
E_{(\rho_1,\rho_1-\rho_2,\dots,\rho_1-\rho_M),\beta}
(z_1,\dots,z_M)
\right|
\le
\frac{C}{1+|z_1|}.
\]
\end{lemma}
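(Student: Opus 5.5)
We plan to prove Lemma \ref{lem2} from a contour-integral representation of the multinomial Mittag--Leffler function, obtained through its Laplace transform. Set $\beta_1=\rho_1$ and $\beta_j=\rho_1-\rho_j$ for $j\ge 2$. Each $\beta_j$ lies in $(0,\rho_1)$. Summing the multinomial series term by term with $\int_0^\infty e^{-st}t^{\gamma-1}\,dt/\Gamma(\gamma)=s^{-\gamma}$, we expect, for $|s|$ large, the identity
\[
\int_0^\infty e^{-st}t^{\beta-1}E_{(\beta_1,\dots,\beta_M),\beta}(z_1t^{\beta_1},\dots,z_Mt^{\beta_M})\,dt
=\frac{s^{\rho_1-\beta}}{s^{\rho_1}-z_1-\sum_{j\ge2}z_j s^{\rho_j}} .
\]
This follows from the geometric series $\sum_k\bigl(\sum_j z_j s^{-\beta_j}\bigr)^k s^{-\beta}$.

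Evaluating at $t=1$ and inverting the transform along a Hankel contour $\gamma(\epsilon,\theta)$ then gives
\[
E(z_1,\dots,z_M)=\frac{1}{2\pi i}\int_{\gamma}\frac{e^{s}s^{\rho_1-\beta}}{s^{\rho_1}-z_1-\sum_{j\ge2}z_js^{\rho_j}}\,ds .
\]
The contour consists of the two rays $\arg s=\pm\theta$ with $|s|\ge\epsilon$, together with the arc $|s|=\epsilon$. We choose $\theta$ slightly larger than $\pi/2$ and such that $\rho_1\theta<\mu$.

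The main step is a uniform lower bound for the denominator $D(s)=s^{\rho_1}-z_1+\sum_{j\ge2}|z_j|s^{\rho_j}$ on $\gamma$. For $s$ on the rays we have $\arg s^{\rho_1}=\pm\rho_1\theta$ and $\arg s^{\rho_j}=\pm\rho_j\theta$, so all of these lie in a common sector of half-angle $\rho_1\theta<\pi$. The sum $w=s^{\rho_1}+\sum_{j\ge2}|z_j|s^{\rho_j}$ therefore also lies in the sector $|\arg w|\le\rho_1\theta$. By hypothesis $-z_1$ lies in the sector $|\arg(-z_1)|\le\pi-\mu$. The angle between $w$ and $-z_1$ would then be bounded by a quantity smaller than $\pi$, provided $\rho_1\theta+\pi-\mu<\pi$, which holds because $\rho_1\theta<\mu$.

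This angular separation yields $|D(s)|\ge c\,(|w|+|z_1|)$ with $c$ depending only on $\mu,\theta$. In particular $|D(s)|\ge c|z_1|$, and $|D(s)|\ge c|s|^{\rho_1}$. On the small arc the sectors must be handled the same way. Since $\arg s$ ranges over $[-\theta,\theta]$ there, the same argument applies. Consequently $D(s)$ has no zeros on or inside the region cut out by the contour, so the inversion is legitimate. The transform identity, established for large $|s|$, extends by analytic continuation.

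Next we prove the bound. For $|z_1|\le1$, the integrand is bounded by $C|e^s||s|^{\rho_1-\beta}/|s|^{\rho_1}$. This is integrable along the rays because $\operatorname{Re}s\to-\infty$ there. It is also bounded on the arc, where we take $\epsilon=1$ and use $|D|\ge c|w|$. Hence $|E|\le C$. For $|z_1|\ge1$ we use $|D(s)|\ge c|z_1|$, which gives
\[
|E|\le \frac{C}{|z_1|}\int_\gamma |e^{s}||s|^{\rho_1-\beta}|ds| .
\]
The integral is finite for any $\beta$, since $e^{s}$ decays exponentially along rays in the left half-plane. Combining the two cases yields $C/(1+|z_1|)$. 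The constant depends on $K$ only through the requirement that the $z_j$ be negative, which the sector argument uses. Boundedness by $K$ should enter only in the convergence step for large $|s|$, and the analytic continuation removes it.

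The main obstacle is making the inversion rigorous. This requires checking uniform convergence of the series in the Laplace computation and justifying contour deformation with no poles. The sector argument above settles the latter, and for $0<\beta<2$ the singularity of $s^{\rho_1-\beta}$ at $0$ is handled by keeping the arc radius fixed.
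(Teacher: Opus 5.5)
Your proposal is correct and is essentially the argument of \cite{22} that the paper relies on here (and partly reproduces in the proof of Theorem~\ref{Mitta-Leffler}). After the substitution $\zeta=s^{\rho_1}$, your Hankel contour becomes the paper's $\gamma(R,\theta)$ with angle $\rho_1\theta\in(\rho_1\pi/2,\mu)$. Your angular separation between $w$ and $-z_1$ is the same idea that underlies the bound $|\zeta-z_1-\sum_{j\ge2}z_j\zeta^{\rho_j/\rho_1}|\ge|z_1|\sin(\mu-\rho_1\theta)$ used there.

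The differences are refinements rather than a new route. You derive the representation by Laplace inversion instead of expanding on a large arc and using Hankel's formula for $1/\Gamma$. Your extra bound $|D(s)|\ge c|s|^{\rho_1}$ holds because, for each fixed $s$, the summands of $w$ lie in the convex sector between $0$ and $\rho_1\arg s$, whose opening is at most $\rho_1\theta<\pi$. That bound lets you fix the arc radius at $1$ instead of at an $R$ depending on $K$, so your constant is in fact independent of $K$.
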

\begin{lemma}\label{lem3}\cite{22}
Let 
$
1>\rho_1>\cdots>\rho_M>0.
$
\,Then, for the multi-index Mittag--Leffler function,
\[
\frac{d}{dt} \Bigl[ t^{\rho_1} E_{\rho',\rho_1+1}(-\lambda_k t^{\rho_1},*) \Bigr]
= t^{\rho_1-1} E_{\rho',\rho_1}(-\lambda_k t^{\rho_1},*),
\]
where, 
$$
E_{\rho',\rho_1}(-\lambda_k t^{\rho_1},*)=E_{(\rho_1,\rho_1-\rho_2,...,\rho_1-\rho_M),\rho_1}(-\lambda_k t^{\rho_1},-q_2t^{\rho_1-\rho_2},...,-q_Mt^{\rho_1-\rho_M}),
$$
$$
\rho'=(\rho_1,\rho_1-\rho_2,...,\rho_1-\rho_M).
$$
\end{lemma}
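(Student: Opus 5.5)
The identity in Lemma \ref{lem3} follows from differentiating the defining series term by term. Write $z_1=-\lambda_k t^{\rho_1}$ and $z_j=-q_j t^{\rho_1-\rho_j}$ for $j=2,\dots,M$. For a fixed multi-index $(k_1,\dots,k_M)$ with $k_1+\dots+k_M=k$, the general term of
\[
t^{\rho_1}E_{\rho',\rho_1+1}(z_1,\dots,z_M)
\]
is
\[
\binom{k}{k_1,\dots,k_M}(-\lambda_k)^{k_1}\prod_{j=2}^M(-q_j)^{k_j}\,
\frac{t^{\rho_1+\rho_1k_1+\sum_{j\ge2}(\rho_1-\rho_j)k_j}}{\Gamma\bigl(\rho_1+1+\rho_1k_1+\sum_{j\ge2}(\rho_1-\rho_j)k_j\bigr)}.
\]
The point is that the exponent of $t$ equals the argument of the Gamma function minus one. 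Put
\[
s=\rho_1+\rho_1k_1+\sum_{j\ge2}(\rho_1-\rho_j)k_j>0 .
\]
Then each term is a constant multiple of $t^{s}/\Gamma(s+1)$. Its derivative is $t^{s-1}/\Gamma(s)$, by $\Gamma(s+1)=s\Gamma(s)$. Factoring out $t^{\rho_1-1}$ leaves exactly the corresponding term of $E_{\rho',\rho_1}(z_1,\dots,z_M)$, with the same coefficients and the same powers of $\lambda_k$ and $q_j$. Summing over all multi-indices gives the claimed identity formally.

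It remains to justify term-by-term differentiation on $(0,T]$, and this is the only real obstacle. The plan is to show that the differentiated series converges absolutely and uniformly on every compact $[\delta,T]\subset(0,T]$. Bound it termwise by replacing each $z_j$ with $|z_j|\le R$ on that interval. The multinomial sum then collapses: $\sum_{k_1+\dots+k_M=k}\binom{k}{k_1,\dots,k_M}R^{k}=(MR)^k$. Each Gamma argument is at least $\rho_1+\rho_M k$, since $\rho_1k_1+\sum_{j\ge2}(\rho_1-\rho_j)k_j\ge\min(\rho_1,\rho_1-\rho_2,\dots,\rho_1-\rho_M)\,k$. So we need a positive lower bound $\beta_*$ on the indices; using $\beta_*=\min\{\rho_1,\rho_1-\rho_j\}>0$ works.

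By Stirling's formula, $1/\Gamma(\rho_1+\beta_* k)$ decays superexponentially in $k$, apart from finitely many small arguments where $1/\Gamma$ is bounded. Hence the majorant $\sum_k (MR)^k/\Gamma(\rho_1+\beta_*k)$ converges. The same majorant controls the original series, so the original series also converges uniformly. The standard theorem on differentiating uniformly convergent series of $C^1$ functions then applies on $[\delta,T]$ for every $\delta>0$, which yields the identity for all $t\in(0,T]$.

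One subtlety is that a non-integer exponent $s$ of $t$ makes $t^{s-1}$ singular at $0$ when $s<1$. This is why the argument is restricted to $t>0$. The lowest term $k=0$ contributes $t^{\rho_1-1}/\Gamma(\rho_1)$, which is consistent with the right-hand side.
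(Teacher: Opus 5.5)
Your argument is correct. The paper gives no proof of its own and only cites \cite{22}; your termwise differentiation, using $\Gamma(s+1)=s\Gamma(s)$ and justified by a convergent majorant on compact subsets of $(0,T]$, is the standard proof of this identity. One slip to fix: the smallest index in $\rho'$ is $\rho_1-\rho_2$, not $\rho_M$. So the Gamma arguments are bounded below by $\rho_1+(\rho_1-\rho_2)k$ rather than $\rho_1+\rho_M k$; this is exactly the $\beta_*$ you then use, so the argument is unaffected.
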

\begin{lemma}\label{lem4}\cite{23} Let 
$
1>\rho_1>\cdots>\rho_M>0.
$ Then the function $t^{\rho_1-1} E_{\rho^{'},\rho_1}(-\lambda_k t^{\rho_1},*)$ is positive
for $t>0$.\\
\end{lemma}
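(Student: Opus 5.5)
We prove the lemma by computing the Laplace transform of
\[
w(t):=t^{\rho_1-1}E_{\rho',\rho_1}(-\lambda_k t^{\rho_1},*)
\]
and inverting it along the negative real axis, where the sign of the integrand can be read off. Throughout, $q_1=1$.

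\textbf{Step 1: the Laplace transform.} Write
\[
\Phi(s)=s^{\rho_1}+\sum_{j=2}^M q_j s^{\rho_j}+\lambda_k .
\]
We claim that for real $s$ large enough
\[
\int_0^\infty e^{-st}w(t)\,dt=\frac{1}{\Phi(s)} .
\]
To see this, expand $w$ using the multinomial series. Each term has the form $c\,t^{\rho_1-1+\rho_1k_1+\sum_{j\ge2}(\rho_1-\rho_j)k_j}/\Gamma(\cdot)$, with $c$ collecting $(-\lambda_k)^{k_1}$, the factors $(-q_j)^{k_j}$ and the multinomial coefficient. Its Laplace transform is $c\,s^{-\rho_1(k+1)+\sum_{j\ge 2}\rho_jk_j}$. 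Summing over $(k_1,\dots,k_M)$, the multinomial theorem gives the geometric series
\[
\frac{1}{s^{\rho_1}}\sum_{k\ge0}\Bigl(-\frac{\lambda_k+\sum_{j\ge2}q_js^{\rho_j}}{s^{\rho_1}}\Bigr)^k=\frac1{\Phi(s)} .
\]
Termwise integration is justified by absolute convergence for $s$ large, because the series with absolute values is the same geometric series with $|q_j|$ and $\lambda_k$, and its ratio is $<1$ for $s$ large. By analytic continuation the identity extends to $\operatorname{Re}s>0$.

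\textbf{Step 2: no zeros of $\Phi$ and contour deformation.} $\Phi$ is analytic in $\mathbb C\setminus(-\infty,0]$ (principal branch), and it has no zeros there. For $s>0$ we have $\Phi(s)>0$. For $s=re^{i\theta}$ with $0<\theta<\pi$,
\[
\operatorname{Im}\Phi(s)=\sum_j q_j r^{\rho_j}\sin(\rho_j\theta)>0,
\]
since all $\rho_j\theta\in(0,\pi)$; the case $-\pi<\theta<0$ is symmetric. Hence $1/\Phi$ is analytic off the cut. We also have the bounds $|1/\Phi(s)|\le C|s|^{-\rho_1}$ for $|s|$ large and $|1/\Phi(s)|\le C$ near $0$, since $\Phi(0)=\lambda_k>0$.

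Start from the Bromwich inversion formula for $w$. Deform the vertical line into the Hankel contour around $(-\infty,0]$. The arcs at infinity vanish by Jordan's lemma, using $e^{-rt}$ decay and the $|s|^{-\rho_1}$ bound. The small circle around $0$ contributes $O(\varepsilon)$, since $1/\Phi$ is bounded near the origin. This yields
\[
w(t)=\frac1\pi\int_0^\infty e^{-rt}\,\operatorname{Im}\Bigl(-\frac{1}{\Phi(re^{i\pi})}\Bigr)\,dr ,
\]
that is,
\[
w(t)=\frac1\pi\int_0^\infty e^{-rt}\frac{\operatorname{Im}\Phi(re^{i\pi})}{|\Phi(re^{i\pi})|^2}\,dr .
\]
This step is where I expect the only technical care. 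One must make sure the integrand is integrable near $r=0$ and $r=\infty$ for each fixed $t>0$. Near $0$ it is bounded. At infinity the factor $e^{-rt}$ controls it.

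\textbf{Step 3: sign.} We have
\[
\operatorname{Im}\Phi(re^{i\pi})=\sum_{j=1}^M q_j r^{\rho_j}\sin(\pi\rho_j)>0 \quad\text{for every } r>0,
\]
because $q_j>0$ and $0<\rho_j<1$. Hence the integrand in Step 2 is strictly positive for all $r>0$, so $w(t)>0$ for every $t>0$. As a by-product, $w$ is completely monotone.

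If one prefers to avoid the contour argument, here is an alternative route. $\Phi$ is a complete Bernstein function, being a positive combination of the functions $s^{\rho_j}$ plus a constant. Hence $1/\Phi$ is a Stieltjes function, and therefore $w$ is a nonzero completely monotone function. A nonzero completely monotone function is strictly positive on $(0,\infty)$.
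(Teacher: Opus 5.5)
The paper gives no proof of Lemma~\ref{lem4}; it simply quotes it from \cite{23}. Your argument is a correct, self-contained proof, so it is a genuinely different route. The multinomial theorem collapses the termwise Laplace transform of $w$ into a geometric series summing to $1/\Phi(s)$. The positivity of $\operatorname{Im}\Phi$ in the upper half-plane and on the upper edge of the cut excludes zeros. The Hankel-contour inversion then gives $w(t)=\frac{1}{\pi}\int_0^\infty e^{-rt}K(r)\,dr$ with $K(r)=\operatorname{Im}\Phi(re^{i\pi})/|\Phi(re^{i\pi})|^2>0$, and the orientation and sign in that formula are right.

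Compared with the bare citation, this gives more than positivity. First, $w$ is completely monotone. Second, your representation shows by Tonelli that $\int_0^\infty e^{-st}w\,dt=\frac1\pi\int_0^\infty \frac{K(r)}{s+r}\,dr$ is finite and equals $1/\Phi(s)$ for every $s>0$. Monotone convergence as $s\downarrow 0$ then gives $\int_0^\infty w\,dt=1/\lambda_k$. Combined with Lemma~\ref{lem3}, which gives $t^{\rho_1}E_{\rho',\rho_1+1}(-\lambda_k t^{\rho_1},*)=\int_0^t w$, this yields $0<t^{\rho_1}E_{\rho',\rho_1+1}(-\lambda_k t^{\rho_1},*)<1/\lambda_k$ and $0<1-\lambda_k t^{\rho_1}E_{\rho',\rho_1+1}(-\lambda_k t^{\rho_1},*)<1$ for all $t>0$ and all $k$. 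These are, with constant $1$, exactly the bounds the paper extracts from Lemma~\ref{lem2} in Lemma~\ref{lem5} and in Section~\ref{sec:5}.

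A few points to tidy:
\begin{itemize}
\item The appeal to ``analytic continuation'' for $\operatorname{Re}s>0$ is not justified as stated, since continuing $1/\Phi$ says nothing about convergence of the Laplace integral there. It is also not needed: place the Bromwich line at some $\gamma>s_0$.
\item Because $|1/\Phi(s)|\sim|s|^{-\rho_1}$ with $\rho_1<1$, the Bromwich integral converges only as a symmetric limit. That is fine here because $w$ is smooth on $(0,\infty)$. Alternatively, verify directly that the Laplace transform of your positive integral equals $1/\Phi$ (Cauchy's theorem on a keyhole contour) and invoke uniqueness of the Laplace transform.
\item Rename the summation index $k$ in Step~1, which clashes with the eigenvalue index in $\lambda_k$.
\end{itemize}
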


\section{Asymptotic behavior of the multinomial Mittag-Leffler function}
\label{sec:3}
 In this section, we establish and justify the asymptotic behavior of the following multinomial Mittag--Leffler function:
\begin{equation}\label{3.1}
E_{\rho',\beta}(z_1,\ldots,z_M)
=
\sum_{k=0}^\infty
\sum_{k_1+\dots+k_M=k}
\binom{k}{k_1,\dots,k_M}
\frac{\prod_{j=1}^M z_j^{k_j}}{\Gamma\left(\beta+\rho_1k_1+\sum_{j=2}^M (\rho_1-\rho_j) k_j\right)}.
\end{equation}
\[
\rho' = (\rho_1, \rho_1 - \rho_2, \ldots, \rho_1 - \rho_M).
\]
Asymptotic estimates for similar multi-index and multivariable Mittag-Leffler functions have been studied by several researchers. Examples include the following authors:

 In paper \cite{24} Virginia S. Kiryakova proved an asymptotic estimate for the multi-index Mittag--Leffler function. 
The function is defined by
\begin{equation}\label{3.2}
E_{(1/\rho_i);(\mu_i)}(z)
=\sum_{k=0}^{\infty}
\frac{z^k}
{\Gamma\!\left(\mu_1 + \frac{k}{\rho_1}\right)
 \cdots
 \Gamma\!\left(\mu_m + \frac{k}{\rho_m}\right)}
\end{equation}
where $m \ge 1$ is an integer, $\rho_1, \ldots, \rho_m > 0$, and 
$\mu_1, \ldots, \mu_m$ are arbitrary real numbers.
The following asymptotic estimate holds for the multi-index Mittag--Leffler function \eqref{3.2}:
\[
\left| E_{(1/\rho_i);(\mu_i)}(z) \right|
\le
\exp\!\left( (\sigma + \varepsilon)\, |z|^{\rho} \right),
\qquad |z| \ge r_0(\varepsilon),
\]
where
\[
\frac{1}{\rho}
=
\frac{1}{\rho_1}
+\cdots+
\frac{1}{\rho_m},
\]
and
\[
\sigma
=
\left( \frac{\rho_1}{\rho} \right)^{\rho/\rho_1}
\cdots
\left( \frac{\rho_m}{\rho} \right)^{\rho/\rho_m}.
\]
Here $r_0(\varepsilon) > 0$ is sufficiently large, that is, 
the estimate holds for all $|z| \ge r_0(\varepsilon)$.

Furthermore, in \cite{25}, Christian Lavault established the asymptotic behaviour of the two-variable Mittag-Leffler function
$$
E_{\alpha,\beta}(x, y; \mu) = 
\sum_{n,m=0}^{\infty} \frac{x^n y^m}{\Gamma(n\alpha + m\beta + \mu)}, 
\quad (\alpha, \beta \in \mathbb{R}, \ \alpha, \beta > 0, \ \mu \in \mathbb{C}).
$$

In this paper, we establish the asymptotic behavior of the Mittag-Leffler multinomial
function in form \eqref{3.1}. We note that this behavior allows us to derive a uniform
lower bound for the spectral denominator appearing in the representation of the solution to the inverse problem. To our knowledge, this is the first time such an asymptotic estimate has been established. It should also be emphasized that this asymptotic estimate
is undoubtedly of independent interest.

The following theorem presents the asymptotic behavior for the multinomial Mittag–Leffler function in the form of \eqref{3.1}.

\begin{theorem}\label{Mitta-Leffler}
 Let $ \beta > 2\rho_1$ and $1 > \rho_1 > \cdots > \rho_M > 0$ be given. 
Assume that $\rho_1 \pi/2 < \mu < \rho_1 \pi$, 
$\mu \le |\arg(z_1)| \le \pi$, $|z_1|\to\infty$
and there exists $K > 0$ such that $-K \le z_j < 0 \; (j = 2, \ldots, M)$. Then we have the following asymptotic formulas in which $p$ is an arbitrary positive integer:

$$
E_{{\rho}',\beta}(z_1, \ldots, z_M)=-\frac{1}{z_1}\frac{1}{\Gamma(\beta-\rho_1)}-\frac{1}{z_1^2}\left(\frac{1}{\Gamma(\beta-2\rho_1)}-
\sum_{j=2}^{M}
\frac{z_j}{\Gamma(\beta-\rho_1-\rho_j)}\right)$$
$$
-\sum_{k=3}^p \frac{C_k(z_2,..,z_M,\rho_j,\beta)}{z_1^k}+O\left(|z_1|^{-p-1}\right).
$$
\end{theorem}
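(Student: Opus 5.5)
Our approach is to reduce the multinomial series to a one-variable expansion in $z_1$ whose coefficients are ordinary Mittag--Leffler functions, and then apply the classical asymptotic expansion of $E_{\alpha,\gamma}$ in the sector $\mu\le|\arg z|\le\pi$. Grouping the multinomial coefficient as $\binom{k}{k_1,\dots,k_M}=\binom{k}{k_1}\binom{k-k_1}{k_2,\dots,k_M}$ and summing over $k_1$ with $n=k-k_1$ fixed, we write
\[
E_{\rho',\beta}(z_1,\dots,z_M)=\sum_{n=0}^{\infty}\sum_{k_2+\dots+k_M=n}\binom{n}{k_2,\dots,k_M}\prod_{j=2}^M z_j^{k_j}\,\frac{1}{n!}\,\frac{d^n}{dz_1^n}\Bigl[\,\cdot\,\Bigr],
\]
or, more conveniently, we use the identity $\sum_{k_1}\binom{k_1+n}{k_1}\frac{z_1^{k_1}}{\Gamma(\gamma+\rho_1 k_1)}=E^{\,n+1}_{\rho_1,\gamma}(z_1)$ (the Prabhakar function), with $\gamma=\beta+\sum_{j\ge2}(\rho_1-\rho_j)k_j$. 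This gives
\[
E_{\rho',\beta}(z)=\sum_{k_2,\dots,k_M\ge0}\binom{n}{k_2,\dots,k_M}\prod_{j=2}^M z_j^{k_j}\,E^{\,n+1}_{\rho_1,\gamma}(z_1).
\]

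For fixed $(k_2,\dots,k_M)$ we then use the known asymptotic expansion of the Prabhakar function for $|z_1|\to\infty$ in $\mu\le|\arg z_1|\le\pi$ with $\rho_1\pi/2<\mu<\rho_1\pi$. Since the exponential contribution is absent there, we have
\[
E^{\,n+1}_{\rho_1,\gamma}(z_1)=\sum_{r=n+1}^{p}\frac{c_{r,n}}{\Gamma(\gamma-\rho_1 r)}(-z_1)^{-r}+O(|z_1|^{-p-1}),\qquad c_{r,n}=(-1)^{r-n-1}\binom{r-1}{n}.
\]
This follows from the Hankel-contour representation $E^{\,\delta}_{\rho_1,\gamma}(z)=\frac{1}{2\pi i}\int_{\mathcal H}\frac{\Gamma(s)\Gamma(\delta-s)}{\Gamma(\delta)\Gamma(\gamma-\rho_1 s)}(-z)^{-s}\,ds$ by shifting the contour past the poles $s=\delta,\delta+1,\dots$. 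The key observation is that the term with index $n$ starts at order $|z_1|^{-n-1}$. Hence only finitely many $(k_2,\dots,k_M)$, namely those with $n\le p-1$, contribute to the terms up to order $p$.

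Collecting orders gives the stated formula. At order $z_1^{-1}$ only $n=0$ contributes, and it yields $-1/(z_1\Gamma(\beta-\rho_1))$. At order $z_1^{-2}$ we get $-1/(z_1^2\Gamma(\beta-2\rho_1))$ from $n=0$. From $n=1$ with $k_j=1$ we have $\gamma=\beta+\rho_1-\rho_j$, and the leading term $c_{2,1}=1$ gives $z_j z_1^{-2}/\Gamma(\beta+\rho_1-\rho_j-2\rho_1)=z_jz_1^{-2}/\Gamma(\beta-\rho_1-\rho_j)$. These match the displayed coefficients. The higher $C_k$ are defined as the finite sums of the remaining coefficients.

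The main obstacle is uniformity of the remainder: the sum over the infinitely many $(k_2,\dots,k_M)$ with $n\ge p$, together with the remainders for $n<p$, must be shown to be $O(|z_1|^{-p-1})$ uniformly for $-K\le z_j<0$. We plan to bound $|E^{\,n+1}_{\rho_1,\gamma}(z_1)|\le C_n|z_1|^{-n-1}$ using the contour integral with $\operatorname{Re}s=n+1$ shifted appropriately. The growth of $C_n$ is controlled by $1/|\Gamma(\gamma-\rho_1 s)|$ with $\gamma\sim\beta+(\rho_1-\rho_M)n$, which supplies factorial-type decay compensating $\binom{n}{k}K^{n}$. This makes the tail $\sum_{n\ge p}$ converge with bound $C|z_1|^{-p-1}$ for $|z_1|\ge1$. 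We would first try to obtain this through a direct integral representation of the whole tail, written as a Hankel integral $\frac{1}{2\pi i}\int \frac{e^{\zeta}\zeta^{\rho_1-\beta}}{\zeta^{\rho_1}-z_1+\sum_j z_j\zeta^{\rho_j}}\,d\zeta$ (the Laplace-transform representation). Expanding $1/(\zeta^{\rho_1}+\sum z_j\zeta^{\rho_j}-z_1)$ in a geometric series in $1/z_1$ with an explicit remainder would give the result neatly. The hypothesis $\beta>2\rho_1$ is used here, to keep the contour integral near $\zeta=0$ integrable in the first terms.
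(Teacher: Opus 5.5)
\textbf{There is a genuine gap: the tail estimate you propose for the uniform remainder cannot hold.} The remainder is the real content of the theorem.

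Your coefficient bookkeeping is correct and agrees with the paper. This covers the regrouping into Prabhakar functions $E^{\,n+1}_{\rho_1,\gamma}(z_1)$ with $\gamma=\beta+\sum_{j\ge2}(\rho_1-\rho_j)k_j$, the coefficients $c_{r,n}=(-1)^{r-n-1}\binom{r-1}{n}$, and the $z_1^{-1}$ and $z_1^{-2}$ terms. It also gives the $C_k$ as explicit finite sums.

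The problem is in the tail.
\begin{itemize}
\item The leading term of $E^{\,n+1}_{\rho_1,\gamma}(z_1)$ is $(-z_1)^{-n-1}/\Gamma(\gamma-\rho_1(n+1))$. Here $\gamma-\rho_1(n+1)=\beta-\rho_1-\sum_{j\ge2}\rho_jk_j\to-\infty$, because $\gamma$ grows at most like $(\rho_1-\rho_M)n$ while $\rho_1 s$ grows like $\rho_1 n$. So on the line $\operatorname{Re}s=n+1$ the factor $1/\Gamma(\gamma-\rho_1 s)$ grows factorially instead of decaying.
\item More decisively, any $C_n$ with $|E^{\,n+1}_{\rho_1,\gamma}(z_1)|\le C_n|z_1|^{-n-1}$ for large $|z_1|$ must satisfy $C_n\ge 1/|\Gamma(\beta-\rho_1-\sum_j\rho_jk_j)|$.
\item Already for $M=2$, the reflection formula makes this bound at least $c\,\Gamma(1+\rho_1+\rho_2 n-\beta)$ for infinitely many $n$. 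Hence $\sum_n|z_2|^nC_n|z_1|^{-n-1}$ diverges for every $z_1$.
\end{itemize}
This is just the divergence of the asymptotic series in $1/z_1$. Its terms cannot be summed; each tail term must be bounded by $|z_1|^{-p-1}$ times something summable, not by $|z_1|^{-n-1}$.

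To keep your decomposition, do exactly that. For $n>p$, place the Mellin--Barnes line at the fixed abscissa $\operatorname{Re}s=p+1$; no pole of $\Gamma(n+1-s)$ is crossed. There $\operatorname{Re}(\gamma-\rho_1 s)\ge\beta-\rho_1(p+1)+(\rho_1-\rho_2)n$, so the factorial decay you invoke is actually present. You would still need to control the $t$-integral uniformly in $n$. The finitely many terms with $n\le p$ are covered by the termwise asymptotics.

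\textbf{The single-integral route you mention at the end is the paper's proof.} The paper writes it in the variable $s=\zeta^{\rho_1}$ on the contour $\gamma(R,\theta)$. It takes one contour integral for the whole function and uses the identity $\frac{1}{Q-z_1}=-\sum_{k=1}^{p}\frac{Q^{k-1}}{z_1^{k}}+\frac{Q^{p}}{z_1^{p}(Q-z_1)}$. It evaluates the coefficients by Hankel's formula and bounds the last term. Two things in your sketch must be fixed.
\begin{itemize}
\item The denominator is $\zeta^{\rho_1}-z_1-\sum_{j\ge2}z_j\zeta^{\rho_j}$, as termwise Hankel integration of the series shows. With your $+$ sign, the $z_j$-terms at order $z_1^{-2}$ come out with the wrong sign.
\item The estimate that carries the proof is missing: $|Q-z_1|\ge c|z_1|$ on the whole contour, uniformly for $-K\le z_j<0$.
\end{itemize}
The paper gets this lower bound from the sign condition. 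In the $s$-variable, $Q(s)=s+\sum_j|z_j|s^{\rho_j/\rho_1}$ adds to $s$ vectors whose arguments lie between $0$ and $\arg s$. So $Q$ maps $\gamma(R,\theta)$ into the sector $|\arg w|\le\theta$. Since $|\arg z_1|\ge\mu>\theta$, this gives $|Q-z_1|\ge|z_1|\sin(\mu-\theta)$. Alternatively, take the radius large in terms of $K$ so that the lower-order terms are a small perturbation on the rays.

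The remainder is then at most $\frac{|z_1|^{-p-1}}{2\rho_1\pi\sin(\mu-\theta)}\int|e^{s^{1/\rho_1}}|\,|s^{(1-\beta)/\rho_1}|\,|Q|^{p}\,|ds|$. This is finite by the exponential decay on the rays, and uniform in the $z_j$ since $|Q|\le|s|+K\sum_j|s|^{\rho_j/\rho_1}$.

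Finally, $\beta>2\rho_1$ is not what keeps these integrals finite near $\zeta=0$. The contour stays a fixed distance from the origin, and Hankel's formula for $1/\Gamma$ holds for every complex argument.
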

\begin{proof}
We have the following integral representation \cite{22}:
\begin{equation}\label{integral representation}
E_{{\rho}',\beta}(z_1,\ldots,z_M)
=
\frac{1}{2\rho_1 \pi i}
\int_{\gamma(R,\theta)}
\frac{
\exp(s^{1/\rho_1})
s^{(1-\beta)/\rho_1}
}{
s
-
z_1
-
\sum_{j=2}^{M}
z_j s^{\rho_j/\rho_1}
}
\, ds.
\end{equation}
where $\rho_1 \pi/2 < \theta < \mu$
and $\gamma(R,\theta)$ denotes the contour
\[
\gamma(R,\theta)
:=
\left\{
s \in {C}
\; ; \;
|s| = R,\; |\arg(s)| \le \theta
\right\}
\;\cup\;
\left\{
s \in {C}
\; ; \;
|s| > R,\; |\arg(s)| = \pm \theta
\right\},\] and , 
\[
R > K + K \sum_{j=2}^{M} R^{\rho_j / \rho_1}.
\]
In \cite{22}, $\beta$ is restricted to the interval $(0,2)$; however, as shown in \cite{26}, the integral representation \eqref{integral representation} holds for all $\beta>0$.

Now we introduce the following notation:
$$
Q(z_2,..,z_M,s)=s  - \displaystyle\sum_{j=2}^{M} z_j s^{\rho_j/\rho_1}, 
$$
then, we have
$$
E_{{\rho}', \,\beta}(z_1, \ldots, z_M)
=
\frac{1}{2\rho_1 \pi i}
\int_{\gamma(R,\theta)}
\frac{
\exp\!\left(s^{1/\rho_1}\right)
\, s^{\frac{1-\beta}{\rho_1}}
}{
Q(z_2,..,z_M,s)-z_1
}
\, ds.
$$
For any \( a \neq 1 \), the following identity holds:
\[
\frac{1}{1-a}
=
\sum_{k=1}^{p} a^{k-1}
+
\frac{a^p}{1-a}.
\]
To prove this, we observe that
\[
(1-a)\left(1 + a+ \cdots + a^{p-1}\right)
=
1 - a^p.
\]
Dividing both sides by \(1-a\) (for \(a \neq 1\)) yields the result.

 Let us write the expression in the following form:
$$
\frac{1}{Q(z_2,..,z_M,s)-z_1}=-\frac{1}{z_1}\frac{1}{1-\frac{Q(z_2,..,z_M,s)}{z_1}}$$
$$=-\sum_{k=1}^{p}\frac{Q^{k-1}(z_2,..,z_M,s)}{z_1^k}+\frac{Q^p(z_2,..,z_M,s)}{z_1^p(Q(z_2,..,z_M,s)-z_1)}
$$
$$
=-\frac{1}{z_1}-\frac{s-\displaystyle\sum_{j=2}^{M} z_j s^{\rho_j/\rho_1}}{z_1^2}-\sum_{k=3}^{p}\frac{Q^{k-1}(z_2,..,z_M,s)}{z_1^k}+\frac{Q^p (z_2,..,z_M,s)}{z_1^p(Q(z_2,..,z_M,s)-z_1)}.
$$
We substitute this into the integral,
$$
E_{{\rho}',\beta}(z_1,\ldots,z_M)
=
\frac{1}{2\rho_1 \pi i}
\int_{\gamma(R,\theta)}
\exp\!\left(s^{1/\rho_1}\right)
\, s^{\frac{1-\beta}{\rho_1}}\times
$$
$$\quad \times
\left(
-\frac{1}{z_1}
-\frac{s-\displaystyle\sum_{j=2}^{M} z_j s^{\rho_j/\rho_1}}{z_1^2}
-\sum_{k=3}^{p}\frac{Q^{k-1}(z_2,..,z_M,s)}{z_1^k}
+\frac{Q^p(z_2,..,z_M,s)}{z_1^p(Q(z_2,..,z_M,s)-z_1)}
\right)ds,
$$
or
$$
E_{{\rho}',\beta}(z_1,\ldots,z_M)=-\frac{1}{z_1}\frac{1}{2\rho_1 \pi i}
\int_{\gamma(R,\theta)}
\exp\!\left(s^{1/\rho_1}\right)
\, s^{\frac{1-\beta}{\rho_1}}ds
$$
$$
-\frac{1}{z^2_1}\frac{1}{2\rho_1 \pi i}
\int_{\gamma(R,\theta)}
\exp\!\left(s^{1/\rho_1}\right)
\, s^{\frac{1-\beta}{\rho_1}}\left(s-\displaystyle\sum_{j=2}^{M} z_j s^{\rho_j/\rho_1}\right)ds
$$
$$
-\frac{1}{2\rho_1 \pi i}
\int_{\gamma(R,\theta)}
\exp\!\left(s^{1/\rho_1}\right)
\, s^{\frac{1-\beta}{\rho_1}}\left(\sum_{k=3}^{p}\frac{Q^{k-1}(z_2,..,z_M,s)}{z_1^k}\right)ds
$$
$$
+\frac{1}{2\rho_1 \pi i}
\int_{\gamma(R,\theta)}
\exp\!\left(s^{1/\rho_1}\right)
\, s^{\frac{1-\beta}{\rho_1}}\frac{Q^p (z_2,..,z_M,s)}{z_1^p(Q(z_2,..,z_M,s)-z_1)}ds.
$$
Using the integral representation of $1/\Gamma(z)$ (see\,\cite{3}, p. 16) , we derive the following expressions involving the Gamma function for $\beta>2\rho_1$,
$$
E_{{\rho}',\beta}(z_1,\ldots,z_M)=-\frac{1}{z_1}\frac{1}{\Gamma(\beta-\rho_1)}$$
$$-\frac{1}{z_1^2}\left(\frac{1}{\Gamma(\beta-2\rho_1)}
-
\sum_{j=2}^{M}
\frac{z_j}{\Gamma(\beta-\rho_1-\rho_j)}\right)-\sum_{k=3}^p \frac{C_k(z_2,..,z_M,\rho_j,\beta)}{z_1^k}
$$
$$
+\frac{1}{2\rho_1 \pi i}
\int_{\gamma(R,\theta)}
\exp\!\left(s^{1/\rho_1}\right)
\, s^{\frac{1-\beta}{\rho_1}}\frac{Q^p (z_2,..,z_M,s)}{z_1^p(Q(z_2,..,z_M,s)-z_1)}ds.
$$
where
$$
C_k(z_2,..,z_M,\rho_j,\beta)=\frac{1}{2\rho_1 \pi i}
\int_{\gamma(R,\theta)}
\exp\!\left(s^{1/\rho_1}\right)
\, s^{\frac{1-\beta}{\rho_1}}Q^{k-1}(z_2,..,z_M,s)ds.
$$

We introduce the following notation
$$
I_p=\frac{1}{2\rho_1 \pi i}
\int_{\gamma(R,\theta)}
\exp\!\left(s^{1/\rho_1}\right)
\, s^{\frac{1-\beta}{\rho_1}}\frac{Q^p (z_2,..,z_M,s)}{z_1^p(Q(z_2,..,z_M,s)-z_1)}ds.$$
Since \( z_j < 0 \) and \( \rho_j < \rho_1 \) for \( j = 2, \ldots, M \), it turns out that the curve 
$
s - \sum_{j=2}^M z_j s^{\frac{\rho_j}{\rho_1}} \quad (s \in \gamma(R, \theta))
$
locates on the right-hand side of \( \gamma(R, \theta) \),
that is, \( \gamma(R, \theta) \) is shifted by the term 
$
- \sum_{j=2}^M z_j s^{\frac{\rho_j}{\rho_1}} 
$ to the positive direction.
This observation immediately implies,
\[
\min_{s \in \gamma(R, \theta)} \left| s - z_1 - \sum_{j=2}^M z_j s^{\frac{\rho_j}{\rho_1}} \right| \geq \min_{s \in \gamma(R, \theta)} |s- z_1| \geq |z_1| \sin(\mu - \theta).
\]
Therefore, we obtain the estimate
\[
|I_p| \leq \frac{|z_1|^{-p-1}}{2\rho_1\pi \sin\left(\mu - \theta\right)}  \int_{\gamma(R,\theta)} \left| \exp\left(s^{1/\rho_1}\right) \right| \left| s^{(1-\beta)/\rho_1} \right| \left|Q^p (z_2,..,z_M,s)\right|ds.
\]
The integral along $\gamma(R, \theta)$ converges, because for $s$ such that $\arg(s) = \pm \theta$ and $|s| > R$, there holds
\[
\left| \exp\left( s^{1/\rho_1} \right) \right| = \exp\left( |s|^{1/\rho_1} \cos\left( \frac{\theta}{\rho_1} \right) \right),
\]
with $\cos\left( \frac{\theta}{\rho_1} \right) < 0$.
Consider the circular part of $\gamma(R,\theta)$ defined by
\[
\{ s\in{C} : |s|=R,\ |\arg s|\le\theta \}.
\]
On this set we have $|s|=R$, and therefore the functions
$e^{s^{1/\rho_1}}$, $s^{(1-\beta)/\rho_1}$, and $Q(z_2,..,z_M,s)$  are bounded.
Hence there exists a constant $C_1>0$ such that
\[
\big| e^{s^{1/\rho_1}} s^{\frac{1-\beta}{\rho_1}} Q^p(z_2,..,z_M,s) \big|
\le C_1.
\]
Moreover, we have
\[
|Q(z_2,..,z_M,s)-z_1|\ge |z_1| \sin(\mu - \theta).
\]
Therefore,
\[
\left|
\frac{1}{2\rho_1\pi i}
\int_{|s|=R,\ |\arg s|\le\theta}
e^{s^{1/\rho_1}}
s^{\frac{1-\beta}{\rho_1}}
\frac{Q^p(z_2,..,z_M,s)}{z_1^p (Q(z_2,..,z_M,s)-z_1)}
\,ds
\right|
\le
\frac{C}{|z_1|^{p+1}},
\]
where $C>0$ is independent of $z_1$.

 Then we have the following asymptotics,
$$
E_{{\rho}',\beta}(z_1, \ldots, z_M)=-\frac{1}{z_1}\frac{1}{\Gamma(\beta-\rho_1)}-\frac{1}{z_1^2}\left(\frac{1}{\Gamma(\beta-2\rho_1)}
-
\sum_{j=2}^{M}
\frac{z_j}{\Gamma(\beta-\rho_1-\rho_j)}\right)$$
$$
-\sum_{k=3}^p \frac{C_k(z_2,..,z_M,\rho_j,\beta)}{z_1^k}+O\left(|z_1|^{-p-1}\right).
$$
This completes the proof. \end{proof}

\section{Lower bound for an infinitesimal denominator}
\label{sec:4}

In the solution of the inverse problem \eqref{1.1}--\eqref{1.4}, the denominator contains the term
\[
b_{k,\rho_1}(t_0)
=
\int_0^{t_0}
g(t_0 - \xi)\,
\xi^{\rho_1-1}
E_{\rho',\rho_1}(-\lambda_k \xi^{\rho_1},\ast)
\, d\xi.
\]
In this section, we derive a lower bound for this term. The estimate is obtained by taking into account whether the function $g(t)$ preserves its sign or changes sign, together with the asymptotic behavior of the above multinomial Mittag--Leffler function.

\medskip

Let us divide the set of natural numbers $\mathbb{N}$ into two groups $K_{0,\rho_1}$ and $K_{\rho_1}$:
$\mathbb{N} = K_{\rho_1} \cup K_{0,\rho_1},$
where a number $k$ is assigned to $K_{0,\rho_1}$ if $b_{k,\rho_1}(t_0) = 0$, and if $b_{k,\rho_1}(t_0) \neq 0$, then the number is assigned to $K_{\rho_1}$.  
Note that for some $t_0$ the set $K_{0,\rho_1}$ can be empty, in which case $K_{\rho_1} = \mathbb{N}$. For example, if $g(t)$ is sign-preserving, then $ K_{\rho_1} = \mathbb{N}
$ for all $ t_0.$
\medskip
\begin{lemma}\label{lem5}
Let $\rho_j \in (0,1)$ for $j=1,\dots,M$, $g(t) \in C[0,T]$ and $g(t)\neq 0$ for $t\in[0,T]$ (that is, $g(t)$ does not change sign). Then the following estimates hold for all $k$ and $t_0$:
\begin{equation}\label{4.1}
 \frac{\widetilde C_0}{\lambda_k} \le |b_{k,\rho_1}(t_0)| \le  \frac{\widetilde C_1}{\lambda_k},
\end{equation}
where the constants $\widetilde C_0$ and $\widetilde C_1 > 0$.
\end{lemma}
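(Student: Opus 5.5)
Since $g$ is continuous on $[0,T]$ and does not vanish, it has a constant sign and $0<g_0:=\min_{[0,T]}|g|\le \max_{[0,T]}|g|=:g_1$. By Lemma \ref{lem4} the kernel $\xi^{\rho_1-1}E_{\rho',\rho_1}(-\lambda_k\xi^{\rho_1},\ast)$ is positive for $\xi>0$. Hence the integrand of $b_{k,\rho_1}(t_0)$ has a fixed sign, and
\[
g_0\, J_k(t_0)\le |b_{k,\rho_1}(t_0)|\le g_1\, J_k(t_0),\qquad
J_k(t_0):=\int_0^{t_0}\xi^{\rho_1-1}E_{\rho',\rho_1}(-\lambda_k\xi^{\rho_1},\ast)\,d\xi .
\]
By Lemma \ref{lem3} the integrand is an exact derivative, so
\[
J_k(t_0)=t_0^{\rho_1}E_{\rho',\rho_1+1}(-\lambda_k t_0^{\rho_1},-q_2t_0^{\rho_1-\rho_2},\dots,-q_Mt_0^{\rho_1-\rho_M}).
\]
(The boundary term at $0$ vanishes because $\rho_1>0$ and $E_{\rho',\rho_1+1}$ is bounded near the origin.) The task therefore reduces to showing $J_k(t_0)\asymp 1/\lambda_k$ uniformly in $k$. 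Here $t_0$ is treated as fixed in $(0,T]$, which is how the constants should be read.

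\emph{Upper bound.} Apply Lemma \ref{lem2} with $\beta=\rho_1+1<2$, $z_1=-\lambda_kt_0^{\rho_1}$ (so $|\arg z_1|=\pi$) and $z_j=-q_jt_0^{\rho_1-\rho_j}\in[-K,0)$, where $K=\max_j q_jT^{\rho_1-\rho_j}$. This gives
\[
J_k\le t_0^{\rho_1}\frac{C}{1+\lambda_kt_0^{\rho_1}}\le \frac{C}{\lambda_k}.
\]
So one may take $\widetilde C_1=g_1C$.

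\emph{Lower bound.} I would use the asymptotics of Theorem \ref{Mitta-Leffler}. Its hypothesis $\beta>2\rho_1$ holds for $\beta=\rho_1+1$, since $\rho_1<1$. With $z_1=-\lambda_kt_0^{\rho_1}$ the theorem gives
\[
E_{\rho',\rho_1+1}=\frac{1}{\lambda_kt_0^{\rho_1}\Gamma(1)}+O\bigl(\lambda_k^{-2}\bigr),
\]
with the $O$-constant uniform because $z_2,\dots,z_M$ range over a compact set (fixed $t_0$). Hence $J_k=\lambda_k^{-1}(1+O(\lambda_k^{-1}))$. There is $k_0$ such that $J_k\ge 1/(2\lambda_k)$ for $k\ge k_0$. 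For the finitely many $k<k_0$, $J_k>0$ because it is the integral of a positive function, so $\lambda_kJ_k\ge c>0$ for these as well. Taking the minimum gives $\widetilde C_0=g_0\min(1/2,c)$.

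The main obstacle I expect is the uniformity of the remainder in Theorem \ref{Mitta-Leffler} with respect to $z_2,\dots,z_M$. The constant depends on bounds for $|Q|$ on the contour, which hold once $|z_j|\le K$, so I would check this explicitly. If uniformity in $t_0$ over all of $(0,T]$ is wanted, the lower bound would need care as $t_0\to0$, and I would restrict the statement to fixed $t_0$.
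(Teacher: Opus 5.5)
Your proposal is correct and takes essentially the same route as the paper. Lemma \ref{lem4} gives positivity of the kernel; the paper uses this only implicitly in its mean-value step, and your sandwich $g_0J_k\le|b_{k,\rho_1}(t_0)|\le g_1J_k$ makes it explicit. Together with Lemma \ref{lem3}, this reduces everything to $t_0^{\rho_1}E_{\rho',\rho_1+1}(-\lambda_k t_0^{\rho_1},\ast)$. The upper bound then follows from Lemma \ref{lem2}, and the lower bound from Theorem \ref{Mitta-Leffler} for $k\ge k_0$ together with a positive finite minimum for $k<k_0$.

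Your reading that the constants depend on the fixed $t_0$ is exactly what the paper's argument yields. It is also unavoidable, since $\lambda_1|b_{1,\rho_1}(t_0)|\to0$ as $t_0\to0$.
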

\begin{proof}
    By the Weierstrass theorem, there exists a constant $g_0>0$ such that
$|g(t)| \ge g_0$.
We apply the mean value theorem and Lemma \ref{lem3} to obtain
\[
|b_{k,\rho_1}(t_0)|
=
\left|
\int_{0}^{t_0}
\xi^{\rho_1-1}
E_{\rho',\rho_1}(-\lambda_k \xi^{\rho_1},*)
\, g(t_0-\xi)\, d\xi
\right|
\]
\[=
|g(\eta_k)|\, t_0^{\rho_1}
E_{\rho',\rho_1+1}(-\lambda_k t_0^{\rho_1},*),
\quad
\eta_k \in [0,t_0].
\]
Applying the asymptotic estimate from Theorem \ref{Mitta-Leffler}, we obtain
\[
E_{\rho',\rho_1+1}(-\lambda_k t_0^{\rho_1},*)
=
\frac{1}{\lambda_k t_0^{\rho_1}}
+
O\!\left(\frac{1}{(\lambda_k t_0^{\rho_1})^2}\right).
\]
Hence, for sufficiently large $k\ge k_0$, using the estimate
$
|g(t)| \ge g_0,
$
one has
\[
|b_{k,\rho_1}(t_0)|
=
|g(\eta_k)|t_0^{\rho_1}
\left(\frac{1}{\lambda_kt_0^{\rho_1}}+O\left(\frac{1}{\left(\lambda_kt_0^{\rho_1}\right)^2}\right)\right)
\ge
\frac{C_0}{\lambda_k}.
\]

It remains to consider the finitely many indices $k<k_0$.
Since the function $g(t)$ does not change sign, it follows that $k \in K_{\rho_1}$. Consequently, we have
$
b_{k,\rho_1}(t_0) \neq 0
$ for all k.

For the finite set $\{k: k<k_0\}$,
the quantities
\[
\lambda_k |b_{k,\rho_1}(t_0)|
\]
form a finite collection of positive numbers.
Therefore their minimum exists and is strictly positive:
\[
\overline{C}=
\min_{k<k_0}
\lambda_k |b_{k,\rho_1}(t_0)|
>0.
\]
Consequently,
\[
|b_{k,\rho_1}(t_0)|
\ge
\frac{\overline{C}}{\lambda_k},
\qquad
k<k_0.
\]
We define
$
\widetilde C_0=\min\{C_0,\overline{C}\}
$. Then 
\[
|b_{k,\rho_1}(t_0)|
\ge
\frac{\widetilde C_0}{\lambda_k}
\quad
\text{for all } k.
\]
The upper estimate follows from Lemma \ref{lem2}:
\[
|b_{k,\rho_1}(t_0)|
\le
C\, |g(\eta_k)|
\frac{t_0^{\rho}}{1 + \lambda_k t_0^{\rho}}
\le
C \, \max_{0 \le \eta_k \le t_0} |g(\eta_k)|
\frac{1}{\lambda_k}
\le
\frac{\widetilde C_1}{\lambda_k}.
\]
This completes the proof.
\end{proof}

\begin{lemma}\label{lem6}  Let $\rho_j \in (0,1)$ for $j=1,\dots,M$, $g(t) \in C^1[0,T]$ and $g(0)\neq 0$ ($g(t)$  changes sign). Then there exist numbers $r_0 > 0$ and $k_0$ such that, for all $t_0 \le r_0$ and $k \ge k_0$, the following estimates hold:
\begin{equation}\label{4.2}
 \frac{\widetilde C_0}{\lambda_k} \le |b_{k,\rho_1}(t_0)| \le  \frac{\widetilde C_1}{\lambda_k},
\end{equation}
where the constants $\widetilde C_0$ and $\widetilde C_1 > 0$ depend on $r_0$ and $k_0$.\end{lemma}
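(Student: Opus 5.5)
The natural approach is to integrate by parts in $b_{k,\rho_1}(t_0)$, using Lemma \ref{lem3} to produce an antiderivative of the kernel. Put
\[
e_k(\xi)=\xi^{\rho_1}E_{\rho',\rho_1+1}(-\lambda_k\xi^{\rho_1},\ast),
\]
so that $e_k'(\xi)=\xi^{\rho_1-1}E_{\rho',\rho_1}(-\lambda_k\xi^{\rho_1},\ast)$ and $e_k(0)=0$. Since $g\in C^1[0,T]$, integration by parts gives
\[
b_{k,\rho_1}(t_0)=g(0)\,e_k(t_0)+\int_0^{t_0}g'(t_0-\xi)\,e_k(\xi)\,d\xi .
\]
This separates a main term, controlled by $g(0)\neq0$, from a remainder that should be small when $t_0$ is small.

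For the main term I would apply Theorem \ref{Mitta-Leffler} with $\beta=\rho_1+1$, exactly as in the proof of Lemma \ref{lem5}. The arguments $z_j=-q_jt^{\rho_1-\rho_j}$ lie in $[-K,0)$ with $K=\max_j q_jT^{\rho_1-\rho_j}$. This yields
\[
e_k(t_0)=\lambda_k^{-1}\bigl(1+O\bigl((\lambda_kt_0^{\rho_1})^{-1}\bigr)\bigr),
\]
so that $|g(0)e_k(t_0)|\ge |g(0)|/(2\lambda_k)$ once $\lambda_kt_0^{\rho_1}$ is large.

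For the remainder I would use Lemma \ref{lem2} with $\beta=\rho_1+1$. It gives $0\le e_k(\xi)\le C\xi^{\rho_1}/(1+\lambda_k\xi^{\rho_1})\le C/\lambda_k$; positivity comes from Lemma \ref{lem4}, since $e_k$ is increasing with $e_k(0)=0$. Hence
\[
\Bigl|\int_0^{t_0}g'(t_0-\xi)e_k(\xi)\,d\xi\Bigr|\le \frac{C\,\|g'\|_{C[0,T]}\,t_0}{\lambda_k}.
\]
Choose $r_0$ so that $C\|g'\|r_0\le|g(0)|/4$. Then for $t_0\le r_0$ we get $|b_{k,\rho_1}(t_0)|\ge |g(0)|/(4\lambda_k)$, provided $\lambda_k t_0^{\rho_1}$ is large enough for the asymptotic estimate above to apply. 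The upper bound comes from the same decomposition: it is at most $(C|g(0)|+C\|g'\|T)/\lambda_k$.

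The main obstacle is uniformity. The $O$-term in Theorem \ref{Mitta-Leffler} must be controlled uniformly in the auxiliary variables $z_j\in[-K,0)$. This appears to hold, since the constants in the remainder estimate depend only on $K$, $\mu$, $\theta$ and $R$. There is also a conflict in the threshold: the condition "$\lambda_kt_0^{\rho_1}$ large" requires $k\ge k_0$, and $k_0$ depends on $t_0$ as $t_0\to0$. I would handle this by fixing $r_0$ first and then proving the statement for $t_0$ in a range where $\lambda_{k_0}t_0^{\rho_1}$ is large, i.e. letting $k_0$ depend on $t_0$ as well.

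Alternatively, I would avoid asymptotics entirely and use a two-sided bound $e_k(t_0)\ge c\,t_0^{\rho_1}/(1+\lambda_kt_0^{\rho_1})$. For fixed small $t_0$ this is again of order $1/\lambda_k$ for large $k$, with the constants then depending on $r_0$ and $k_0$, as the statement allows.
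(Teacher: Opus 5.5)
Your argument is essentially the paper's: integrate by parts with Lemma \ref{lem3}, obtain $e_k(t_0)\sim 1/\lambda_k$ for the boundary term $g(0)e_k(t_0)$ from Theorem \ref{Mitta-Leffler}, and make the $g'$-term small by taking $t_0\le r_0$. The only difference is that you bound the $g'$-term by $C\|g'\|t_0/\lambda_k$ directly from Lemma \ref{lem2}, whereas the paper uses the mean value theorem and the asymptotics of $E_{\rho',\rho_1+2}$; your version is slightly cleaner and holds for all $k$ and $t_0$.

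Your observation that $k_0$ must depend on $t_0$ is correct, and the paper's proof carries the same dependence, since its error term also needs $\lambda_k t_0^{\rho_1}$ to be large. The dependence cannot be removed. For fixed $k$, Lemmas \ref{lem4} and \ref{lem2} give $|b_{k,\rho_1}(t_0)|\le\|g\|\,e_k(t_0)\le C\|g\|\,t_0^{\rho_1}\to0$ as $t_0\to0$, so the lemma holds only with $k_0=k_0(t_0)$.
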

\begin{proof} 
Let $\rho_j \in (0,1)$. Using Lemma \ref{lem3} we integrate by parts and then apply the mean value theorem. Then we have
$$
b_{k,\rho_1}(t_0) 
= \int_0^{t_0} g(t_0 - \xi) \, \xi^{\rho_1-1} E_{\rho',\rho_1}(-\lambda_k \xi^{\rho_1},*) \, d\xi $$
$$= \int_0^{t_0} g(t_0 - \xi) \, d\big[ \xi^{\rho_1} E_{\rho',\rho_1+1}(-\lambda_k \xi^{\rho_1},*) \big] $$
$$= g(t_0 - \xi) \, \xi^{\rho_1} E_{\rho',\rho_1+1}(-\lambda_k \xi^{\rho_1},*) \Big|_{0}^{t_0} 
+ \int_0^{t_0} g'(t_0 - \xi) \, \xi^{\rho_1} E_{\rho',\rho_1+1}(-\lambda_k \xi^{\rho_1},*) \, d\xi $$
$$= g(0) t_0^{\rho_1} E_{\rho',\rho_1+1}(-\lambda_k t_0^{\rho_1},*) + g'(\eta_k) \int_0^{t_0} \xi^{\rho_1} E_{\rho',\rho_1+1}(-\lambda_k \xi^{\rho_1},*) \, d\xi, \quad \eta_k \in [0,t_0].
$$
For the last integral, using Lemma \ref{lem3}
\[
\int_0^{t_0} \xi^{\rho_1} E_{\rho',\rho_1+1}(-\lambda_k \xi^{\rho_1},*) \, d\xi = t_0^{\rho_1+1} E_{\rho',\rho_1+2}(-\lambda_k t_0^{\rho_1},*).
\]
Applying the Theorem \ref{Mitta-Leffler}, 
$$
E_{\rho',\rho_1+1}\!\left(-\lambda_k t_0^{\rho_1},*\right)
=
\frac{1}{\lambda_k t_0^{\rho_1}} +O\left(\frac{1}{(\lambda_k t^{\rho_1}_0)^{2}}\right),
 $$
$$
E_{\rho',\rho_1+2}\!\left(-\lambda_k t_0^{\rho_1},*\right)
=
\frac{1}{\lambda_k t_0^{\rho_1}} +O\left(\frac{1}{(\lambda_k t^{\rho_1}_0)^{2}}\right),
$$
we obtain
\[
b_{k,\rho_1}(t_0) = \frac{g(0)}{\lambda_k} + \frac{g'(\eta_k) t_0}{\lambda_k } +  O\left(\frac{1}{(\lambda_k t^{\rho_1}_0)^{2}}\right).
\]

If $g(0) \neq 0$, sufficiently small $t_0$  and sufficiently large $k$ we obtain the required lower estimate. This also implies the corresponding upper bound. 
\end{proof}
\begin{corollary}\label{cor1}
If the conditions of Lemma \ref{lem6} are satisfied, then the estimate \eqref{4.2} holds for all $t_0 \le r_0$ and $k \in K_{\rho_1}$.
\end{corollary}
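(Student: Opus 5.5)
The plan is to combine Lemma \ref{lem6}, which gives \eqref{4.2} for $t_0\le r_0$ and $k\ge k_0$, with a separate argument for the finitely many indices $k<k_0$ that lie in $K_{\rho_1}$. This is the same device used at the end of the proof of Lemma \ref{lem5}.

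Fix $t_0\le r_0$ and first take $k\ge k_0$. Lemma \ref{lem6} gives
\[
\frac{\widetilde C_0}{\lambda_k}\le |b_{k,\rho_1}(t_0)|\le \frac{\widetilde C_1}{\lambda_k}
\]
directly. I would also note that for $t_0\le r_0$ every $k\ge k_0$ automatically lies in $K_{\rho_1}$, since the lower bound forces $b_{k,\rho_1}(t_0)\neq0$. Hence $K_{0,\rho_1}\subset\{k<k_0\}$ is finite.

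Next take $k<k_0$ with $k\in K_{\rho_1}$. By definition $b_{k,\rho_1}(t_0)\neq 0$, so the numbers $\lambda_k|b_{k,\rho_1}(t_0)|$, for $k<k_0$ and $k\in K_{\rho_1}$, form a finite set of strictly positive numbers. Set $\overline C=\min\lambda_k|b_{k,\rho_1}(t_0)|>0$ over this set, and replace $\widetilde C_0$ by $\min\{\widetilde C_0,\overline C\}$.

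For the upper bound on these finitely many indices, I would use the representation $b_{k,\rho_1}(t_0)=g(0)t_0^{\rho_1}E_{\rho',\rho_1+1}(-\lambda_k t_0^{\rho_1},*)+g'(\eta_k)t_0^{\rho_1+1}E_{\rho',\rho_1+2}(-\lambda_k t_0^{\rho_1},*)$ obtained in the proof of Lemma \ref{lem6}. Lemma \ref{lem2}, together with the representation in Theorem \ref{Mitta-Leffler} for larger $\beta$, bounds each Mittag-Leffler factor by $C/(1+\lambda_k t_0^{\rho_1})$, so $|b_{k,\rho_1}(t_0)|\le C(\|g\|_{C}+T\|g'\|_{C})/\lambda_k$. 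Alternatively, one can simply take the maximum of $\lambda_k|b_{k,\rho_1}(t_0)|$ over the finite set and enlarge $\widetilde C_1$ accordingly.

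The one point needing care is the dependence of the constants on $t_0$. Since the minimum $\overline C$ is taken over a finite set at fixed $t_0$, the resulting $\widetilde C_0$ may depend on $t_0$. I would state this explicitly, consistent with the phrasing ``constants depend on $r_0$ and $k_0$''. I would not attempt uniformity in $t_0$, because $b_{k,\rho_1}(t_0)$ can approach zero as $t_0$ varies for small $k$. This is the main potential obstacle, and I would resolve it by treating $t_0$ as fixed, which is all the subsequent existence argument needs.
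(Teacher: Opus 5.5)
Your proposal is correct and is essentially the paper's argument. The paper's proof is the single remark that the corollary follows from Lemma \ref{lem5}, meaning the finite-minimum step over $\{k<k_0\}\cap K_{\rho_1}$ that you spell out, combined with Lemma \ref{lem6} for $k\ge k_0$. Your caveat about $t_0$ is also justified: for each fixed $k$ one has $|b_{k,\rho_1}(t_0)|\le C\|g\|\,t_0^{\rho_1}\to 0$ as $t_0\to 0$, so the constants cannot be uniform in $t_0$, and reading the statement at fixed $t_0$ (with $k_0$ and $\widetilde C_0$ depending on it) is the right interpretation.
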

  It is not hard to see that, this statement follow from Lemma \ref{lem5}.

\begin{corollary}\label{cor2}
If the conditions of Lemma \ref{lem6} are satisfied and $t_0$ is sufficiently small, then the set $K_{0,\rho_1}$ has a finite number of elements.
\end{corollary}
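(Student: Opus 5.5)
The plan is to deduce Corollary \ref{cor2} directly from Lemma \ref{lem6}. I will show that, for $t_0$ small enough, $b_{k,\rho_1}(t_0)$ can vanish only for finitely many $k$.

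First, fix the numbers $r_0>0$ and $k_0$ given by Lemma \ref{lem6}, and take any $t_0\le r_0$. By \eqref{4.2}, for every $k\ge k_0$ we have
\[
|b_{k,\rho_1}(t_0)|\ge \frac{\widetilde C_0}{\lambda_k}>0,
\]
because $\widetilde C_0>0$ and $\lambda_k>0$ is finite. Hence $b_{k,\rho_1}(t_0)\neq 0$, so every index $k\ge k_0$ lies in $K_{\rho_1}$. By the definition of the splitting $\mathbb{N}=K_{\rho_1}\cup K_{0,\rho_1}$, this gives
\[
K_{0,\rho_1}\subset\{1,2,\dots,k_0-1\},
\]
and therefore $K_{0,\rho_1}$ has at most $k_0-1$ elements.

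The only step needing care is that $k_0$ must not depend on $t_0$ in a way that would undermine the argument. Lemma \ref{lem6} states that $r_0$ and $k_0$ are chosen together, and that the estimate holds for all $t_0\le r_0$ and all $k\ge k_0$. I will therefore check the expansion
\[
b_{k,\rho_1}(t_0)=\frac{g(0)+g'(\eta_k)t_0}{\lambda_k}+O\bigl((\lambda_k t_0^{\rho_1})^{-2}\bigr)
\]
and confirm that the main term is at least $|g(0)|/(2\lambda_k)$ once $t_0\le r_0$ with $r_0\max|g'|\le |g(0)|/2$.

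The remainder term is the main obstacle. For a fixed $t_0$ it is dominated by the main term for large $k$, since it is $O(\lambda_k^{-2})$ while the main term is of order $\lambda_k^{-1}$. However, the threshold for $k$ may grow as $t_0\to0$. This is acceptable for the corollary, because finiteness is asked for each fixed sufficiently small $t_0$. So I will allow $k_0=k_0(t_0)$ if necessary, and the conclusion that $K_{0,\rho_1}$ is finite still holds.
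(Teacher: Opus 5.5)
Your proposal is correct and takes the same route as the paper: the expansion from Lemma \ref{lem6} gives $b_{k,\rho_1}(t_0)\neq 0$ for all $k\ge k_0$, hence $K_{0,\rho_1}\subset\{1,\dots,k_0-1\}$. Your caution that $k_0$ may have to depend on $t_0$ is justified, since the remainder is only small once $\lambda_k t_0^{\rho_1}$ is large, and it does no harm because finiteness is only needed for each fixed small $t_0$.
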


\begin{proof}
By Lemma \ref{lem6}, for sufficiently small $t_0 \le r_0$ and sufficiently large $k \ge k_0$, we have
\[
b_{k,\rho_1}(t_0) = \frac{g(0)}{\lambda_k} + \frac{g'(\eta_k) t_0}{\lambda_k } +  O\left(\frac{1}{(\lambda_k t^{\rho_1}_0)^{2}}\right) \neq 0,
\]
since $g(0) \neq 0$. 

Therefore, the equality $b_{k,\rho_1}(t_0) = 0$ can only hold for finitely many indices $k < k_0$. 
This implies that the set
\[
K_{0,\rho_1} = \{ k \in \mathbb{N} : b_{k,\rho_1}(t_0) = 0 \},
\]
contains only finitely many elements. 
\end{proof}

\section{Existence}
\label{sec:5}
According to the Fourier method, we seek the function $u(x, t)$ as a formal series:
\begin{equation}\label{5.1}
u(x, t) = \sum_{k=1}^{\infty} T_k(t) \, v_k(x),
\end{equation}
here the functions $T_k(t)$ are a solution of the Cauchy type problem
\begin{equation}\label{5.2}
\sum_{j=1}^M q_j\partial_t^{\rho_{j}} T_k(t) + \lambda_k T_k(t) = f_kg(t),  \end{equation}
\begin{equation}\label{5.3}
T_k(0) = \varphi_k. 
\end{equation}
Note that the unknowns in problem \eqref{5.2} are also the coefficients $f_k$. To determine these numbers, from \eqref{1.4} we obtain the additional condition
\begin{equation}\label{5.4}
    T_k(t_0)=\Psi_k.
\end{equation}
The solution of problem \eqref{5.2}--\eqref{5.3} has the form \cite{11},
\begin{equation}\label{5.5}
T_k(t) = \varphi_k \left[ 1 - \lambda_k t^{\rho_1} E_{ \rho', \rho_1 + 1} \left( -\lambda_k t^{\rho_1}, * \right) \right]
+ f_kb_{k,\rho_1}(t).
\end{equation}
Then we obtain the equation
$$T_k(t_0) = \varphi_k \left[ 1 - \lambda_k {t_0}^{\rho_1} E_{ \rho', \rho_1 + 1} \left( -\lambda_k {t_0}^{\rho_1}, * \right) \right]+ f_kb_{k,\rho_1}(t_0)=\Psi_k.$$ 
Consequently,
\begin{equation}\label{5.6}
f_kb_{k,\rho_1}(t_0)=\Psi_k-\varphi_k \left[ 1 - \lambda_k t_0^{\rho_1} E_{ \rho', \rho_1 + 1} \left( -\lambda_k t_0^{\rho_1}, * \right) \right],
\end{equation}
\begin{equation}\label{5.7}
f_k=\frac{\Psi_k-\varphi_k \left[ 1 - \lambda_k t_0^{\rho_1} E_{ \rho', \rho_1 + 1} \left( -\lambda_k t_0^{\rho_1}, * \right) \right]}{b_{k,\rho_1}(t_0)}.
\end{equation}

Thus, we have constructed the formal series
\begin{equation}\label{5.8}
u(x,t)= \sum_{k=1}^{\infty}\varphi_k \left[ 1 - \lambda_k t^{\rho_1} E_{ \rho', \rho_1 + 1} \left( -\lambda_k t^{\rho_1}, * \right) \right]v_k(x)+\sum_{k=1}^{\infty}f_kb_{k,\rho_1}(t) v_k(x),
\end{equation}
 and
\begin{equation}\label{5.9}
f(x)=\sum_{k=1}^{\infty}\frac{\Psi_k v_k(x)}{b_{k,\rho_1}(t_0)}-\sum_{k=1}^{\infty}\frac{\varphi_k \left[ 1 - \lambda_k t_0^{\rho_1} E_{ \rho', \rho_1 + 1} \left( -\lambda_k t_0^{\rho_1}, * \right) \right]v_k(x)}{b_{k,\rho_1}(t_0)}.
\end{equation}
Obviously, the functions \eqref{5.8} and \eqref{5.9} are formal solutions. It only remains to justify the Fourier method.

\begin{theorem}\label{theorem2} Let $\varphi \in D(\widehat{A}^{\tau+1})$ and $\Psi \in D(\widehat{A}^{\tau+1})$, where
$
\tau > \frac{N}{2m}$, and $g(t) \in C[0,T]$, \,\, $g(t) \neq 0$.
Then there exists a classical solution $(u(x,t), f(x))$ of the inverse problem \eqref{1.1}--\eqref{1.4}, which can be represented in the form of the series \eqref{5.8} and \eqref{5.9}.
\end{theorem}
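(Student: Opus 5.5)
The plan is to justify the Fourier method term by term. The bounds will come from Lemma \ref{lem5} (which applies because $g$ is sign-preserving, so $K_{\rho_1}=\mathbb{N}$), from Lemma \ref{lem2} for the Mittag-Leffler factors, and from Lemma \ref{lem1} to pass from weighted $\ell^2$ bounds on Fourier coefficients to uniform convergence of the differentiated series.

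\textbf{Step 1: bound on $f_k$.} From \eqref{5.7} and the lower bound $|b_{k,\rho_1}(t_0)|\ge \widetilde C_0/\lambda_k$ we get
\[
|f_k|\le \frac{\lambda_k}{\widetilde C_0}\Bigl(|\Psi_k|+|\varphi_k|\,\bigl|1-\lambda_k t_0^{\rho_1}E_{\rho',\rho_1+1}(-\lambda_k t_0^{\rho_1},*)\bigr|\Bigr).
\]
By Lemma \ref{lem2}, the bracket is bounded uniformly in $k$. Hence $|f_k|\le C\lambda_k(|\Psi_k|+|\varphi_k|)$, and therefore
\[
\sum_k \lambda_k^{2\tau}|f_k|^2\le C\sum_k\lambda_k^{2\tau+2}(|\Psi_k|^2+|\varphi_k|^2)<\infty .
\]
So $f\in D(\widehat A^{\tau})$. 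Since $\tau>N/(2m)$, Lemma \ref{lem1} with $\alpha=0$ shows that the series \eqref{5.9} converges uniformly and $f\in C(\overline\Omega)$.

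\textbf{Step 2: regularity of $u$.} Write $u=u_1+u_2$ according to \eqref{5.8}.
\begin{itemize}
\item For $Au$, we need $\sum_k\lambda_k^{2\tau+2}|T_k(t)|^2$ bounded uniformly in $t\in[0,T]$. Then Lemma \ref{lem1} with $|\alpha|\le m$, applied to $\widehat A^{-\sigma}$ with $\sigma=\tau+1$, gives $D^\alpha u\in C(\overline\Omega\times[0,T])$ for $|\alpha|\le m$.
\item For $u_1$, the factor $1-\lambda_kt^{\rho_1}E_{\rho',\rho_1+1}(-\lambda_kt^{\rho_1},*)$ is bounded by Lemma \ref{lem2}, so $\varphi\in D(\widehat A^{\tau+1})$ suffices.
\item For $u_2$, Lemma \ref{lem5} gives $|b_{k,\rho_1}(t)|\le C\lambda_k^{-1}$, uniformly in $t\le T$. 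Its proof only uses Lemma \ref{lem2}, Lemma \ref{lem4} and $\max|g|$. Hence $\lambda_k|f_kb_{k,\rho_1}(t)|\le C(|\Psi_k|+|\varphi_k|)\lambda_k$, and the needed weight $\lambda_k^{2\tau+2}$ is exactly what the hypotheses $\varphi,\Psi\in D(\widehat A^{\tau+1})$ give.
\end{itemize}
This is where the loss of one power of $\lambda_k$ (ill-posedness) is absorbed by the extra smoothness assumption.

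\textbf{Step 3: time derivatives.} The sum $\sum_j q_j\partial_t^{\rho_j}u$ is obtained from the equation itself. Each $T_k$ solves \eqref{5.2}, so
\[
\sum_j q_j\partial_t^{\rho_j}T_k=-\lambda_kT_k+f_kg(t).
\]
Both series $\sum_k\lambda_kT_kv_k$ and $\sum_k f_kg(t)v_k$ converge uniformly by Steps 1--2. Hence termwise application of the fractional operator is legitimate, and $\sum_j q_j\partial_t^{\rho_j}u\in C(\overline\Omega\times(0,T])$.

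To make this rigorous one shows that the partial sums $S_n$ converge uniformly in $C(\overline\Omega\times[0,T])$ together with $\sum_jq_j\partial_t^{\rho_j}S_n$. One then uses closedness of the Caputo operator, either via the Riemann--Liouville integral representation or by a weak-form argument, to identify the limit.

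\textbf{Remaining checks.}
\begin{itemize}
\item The boundary conditions \eqref{1.3} hold termwise and pass to the limit since the $D^\alpha$-series converge uniformly up to the boundary.
\item \eqref{1.2} holds from $T_k(0)=\varphi_k$ and uniform convergence at $t=0$.
\item \eqref{1.4} holds from the choice \eqref{5.7}.
\end{itemize}

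I expect the main obstacle to be the justification of the fractional time derivative: continuity of $\partial_t^{\rho_j}u$ for each $j$ separately, not just the combination. I would first try to handle only the combination appearing in \eqref{1.1}, as above. Should individual derivatives be needed, I would estimate $\partial_t^{\rho_j}T_k$ directly using Lemma \ref{lem3} and Lemma \ref{lem2}, obtaining bounds of the form $C\lambda_k(|\varphi_k|+|f_k|\lambda_k^{-1})t^{-\rho_j}$ on compact subsets of $(0,T]$.
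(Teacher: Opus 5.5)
Your proposal is correct and follows essentially the same route as the paper. The paper also uses the lower bound of Lemma~\ref{lem5} to control $1/b_{k,\rho_1}(t_0)$, at the cost of one power of $\lambda_k$, which is absorbed by $\varphi,\Psi\in D(\widehat{A}^{\tau+1})$. It uses Lemma~\ref{lem2} to bound the Mittag-Leffler factors, Lemma~\ref{lem1} with $\widehat{A}^{-\tau}$ for $f$ and $D^\alpha\widehat{A}^{-\tau-1}$ for $u$, and the equation itself to obtain $\sum_j q_j\partial_t^{\rho_j}u$. Your extra remark about identifying the limit of the fractional derivatives via closedness addresses a point the paper only asserts, so your version is slightly more careful there.
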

\begin{proof}
Let the assumptions of the theorem be satisfied. First, let us prove the absolute and uniform convergence of the series \eqref{5.9} for the function $f$. Set
$$
f_n^1(x)=\sum_{k=1}^{n}\frac{\Psi_k v_k(x)}{b_{k,\rho_1}(t_0)},
$$
$$
f_n^2(x)=\sum_{k=1}^{n}\frac{\varphi_k \left[ 1 - \lambda_k t_0^{\rho_1} E_{ \rho', \rho_1 + 1} \left( -\lambda_k t_0^{\rho_1}, * \right) \right]v_k(x)}{b_{k,\rho_1}(t_0)}.
$$
The condition of the theorem implies the convergence of the series
\[
\sum_{k=1}^{\infty} \lambda_k^{2(\tau+1)} |\Psi_k|^2 \le C_{\Psi} < \infty.
\]
For the function $\Psi$ for some $\tau > \frac{N}{2m}$, and for the convergence of the series
\[
\sum_{k=1}^{\infty} \lambda_k^{2(\tau+1)} |\varphi_k|^2 \le C_{\varphi}< \infty,
\]
for the function $\varphi$.

Then, applying Lemma \ref{lem1} with $\alpha=0$ and using the identity 
$\widehat{A}^{-\tau} v_k(x)=\lambda_k^{-\tau} v_k(x)$, we obtain
 \begin{equation}\label{5.10}
\| f_n^1(x) \|_{C(\Omega)}
=
\left\|
 \widehat{A}^{-\tau} \sum_{k=1}^{n}\frac{\lambda_k^{\tau}\Psi_k v_k(x)}{b_{k,\rho_1}(t_0)}
\right\|_{C(\Omega)}
\le C \,
\left\|
\sum_{k=1}^{n}\frac{\lambda_k^{\tau}\Psi_k v_k(x)}{b_{k,\rho_1}(t_0)}
\right\|_{L^2(\Omega)}.\end{equation}

 By the orthonormality of the system ${v_k(x)}$, we have
$$
\| f_n^1(x) \|_{C(\Omega)}^2
\le
C 
\sum_{k=1}^{n}\left|\frac{\lambda_k^{\tau}\Psi_k}{b_{k,\rho_1}(t_0)}
\right|^2.
$$
Further, applying the Lemma \ref{lem5} (lower bound), we obtain
\begin{equation} \label{5.11}
\| f_n^1(x) \|_{C(\Omega)}^2
\le
C\sum_{k=1}^{\infty} \lambda_k^{2(\tau+1)} |\Psi_k|^2 \le C\cdot C_{\Psi} < \infty.
\end{equation}
Using the same argument as above to the function $f_n^2(x)$ , we arrive at the inequality

 $$
\| f_n^2(x) \|_{C(\Omega)}
=
\left\|
 \widehat{A}^{-\tau} \sum_{k=1}^{n}\frac{\lambda_k^\tau\varphi_k \left[ 1 - \lambda_k t_0^{\rho_1} E_{\rho', \rho_1 + 1} \left( -\lambda_k t_0^{\rho_1}, * \right) \right]v_k(x)}{b_{k,\rho_1}(t_0)}
\right\|_{C(\Omega)}$$
\begin{equation}\label{5.12}
\le C \,
\left\|
\sum_{k=1}^{n}\frac{\lambda_k^\tau\varphi_k \left[ 1 - \lambda_k t_0^{\rho_1} E_{ \rho', \rho_1 + 1} \left( -\lambda_k t_0^{\rho_1}, * \right) \right]v_k(x)}{b_{k,\rho_1}(t_0)}
\right\|_{L^2(\Omega)}.\end{equation}
Since the system ${v_k(x)}$ is orthonormal, it follows that
$$
\| f_n^2(x) \|_{C(\Omega)}^2
\le
C 
\sum_{k=1}^{n}\left|\frac{\lambda_k^\tau\varphi_k \left[ 1 - \lambda_k t_0^{\rho_1} E_{ \rho', \rho_1 + 1} \left( -\lambda_k t_0^{\rho_1}, * \right) \right]}{b_{k,\rho_1}(t_0)}
\right|^2.
$$

By Lemmas \ref{lem2}, \ref{lem4} and \ref{lem5} (lower bound), we get
\begin{equation}\label{5.13}
\| f_n^2(x) \|_{C(\Omega)}^2
 \le
C\sum_{k=1}^{\infty} \lambda_k^{2(\tau+1)} |\varphi_k|^2 \le C\cdot C_{\varphi} < \infty.
\end{equation}
The estimates \eqref{5.11} and \eqref{5.13} imply the uniform convergence of the series \eqref{5.9} defining the function $f(x)$. On the other hand, the sums in \eqref{5.10} and \eqref{5.12} converge under any permutation of their
terms, because these terms are mutually orthogonal. This implies the absolute convergence of the
series \eqref{5.9}.

Using the definition of the coefficients $f_k$ , we study the following three sums:

\begin{equation}\label{5.14}
u_n^1(x,t)=\sum_{k=1}^{n}\varphi_k \left[ 1 - \lambda_k t^{\rho_1} E_{ \rho', \rho_1 + 1} \left( -\lambda_k t^{\rho_1}, * \right) \right]v_k(x),
\end{equation}
\begin{equation}\label{5.15}
u_n^2(x,t)=\sum_{k=1}^{n} \frac{\Psi_kb_{k,\rho_1}(t) v_k(x)}{b_{k,\rho_1}(t_0)},
\end{equation}
\begin{equation}\label{5.16}
u_n^3(x,t)=\sum_{k=1}^n \frac{\varphi_k \left[ 1 - \lambda_k t^{\rho_1} E_{ \rho', \rho_1 + 1} \left( -\lambda_k t^{\rho_1}, * \right) \right]b_{k,\rho_1}(t)v_k(x)}{b_{k,\rho_1}(t_0)}.
\end{equation}

 Let $|\alpha|\le m$. We need to show that each of these sums converges uniformly
and absolutely in the domain $\overline{\Omega} \times (0 , T]$ after the operators $D^\alpha$ and
$\partial_t^{\rho_j}$ are applied term by term to the sum.

By Lemma \ref{lem1}, one has the estimate
\[
\| D^\alpha u_n^1 \|_{C(\Omega)}
=
\left\|
D^\alpha \widehat{A}^{-\tau-1}
\sum_{k=1}^{n}\lambda_k^{\tau+1}\varphi_k \left[ 1 - \lambda_k t^{\rho_1} E_{ \rho', \rho_1 + 1} \left( -\lambda_k t^{\rho_1}, * \right) \right]v_k(x)
\right\|_{C(\Omega)}
\]
\begin{equation}\label{5.17}
\le
C
\left\|
\sum_{k=1}^{n}\lambda_k^{\tau+1}\varphi_k \left[ 1 - \lambda_k t^{\rho_1} E_{ \rho', \rho_1 + 1} \left( -\lambda_k t^{\rho_1}, * \right) \right]v_k(x)
\right\|_{L^2(\Omega)} .
\end{equation}
Using the orthonormality of the system $v_k(x)$, we obtain
\[
\| D^\alpha u_n^1 \|_{C(\Omega)}^2
\le
C \sum_{k=1}^{n}
\left|
\lambda_k^{\tau+1}\varphi_k \left[ 1 - \lambda_k t^{\rho_1} E_{ \rho', \rho_1 + 1} \left( -\lambda_k t^{\rho_1}, * \right) \right]
\right|^2.
\]
Apply Lemma \ref{lem2}, to get
\[
\| D^\alpha u_n^1 \|_{C(\Omega)}^2
\le C\sum_{k=1}^{\infty} \lambda_k^{2(\tau+1)} |\varphi_k|^2 \le C\cdot C_{\varphi} < \infty.
\]
Similarly, for $u_n^2(x,t)$ , we have  
\[
\| D^\alpha u_n^2 \|_{C(\Omega)}
=
\left\|
D^\alpha \widehat{A}^{-\tau-1}
\sum_{k=1}^{n}\lambda_k^{\tau+1}\frac{\Psi_k b_{k,\rho_1}(t) v_k(x)}{b_{k,\rho_1}(t_0)}
\right\|_{C(\Omega)}
\]
\begin{equation}\label{5.18}
\le
C
\left\|
\sum_{k=1}^{n}\lambda_k^{\tau+1}\frac{\Psi_k b_{k,\rho_1}(t) v_k(x)}{b_{k,\rho_1}(t_0)}
\right\|_{L^2(\Omega)} .
\end{equation}
By the orthonormality of the system ${v_k(x)}$, one has
\[
\| D^\alpha u_n^2 \|_{C(\Omega)}^2
\le
C \sum_{k=1}^{n}
\left|
\lambda_k^{\tau+1}\frac{\Psi_k b_{k,\rho_1}(t)}{b_{k,\rho_1}(t_0)}
\right|^2.
\]
Let $ g \in  C[0,T] $ and define
$\|g\| = \displaystyle \max_{0 \le t \le T} |g(t)|.$
Then,
\[
\left|
\int_0^t (t-\xi)^{\rho_1 - 1}
E_{\rho',\rho_1}\!\left(
-\lambda_k (t-\xi)^{\rho_1}
\right)
g(\xi)\, d\xi
\right|\]
\[\le
\int_0^t
\left|
(t-\xi)^{\rho_1 - 1}
E_{\rho',\rho_1}\!\left(
-\lambda_k (t-\xi)^{\rho_1},*
\right)
\right|
\, |g(\xi)|\, d\xi
\]
\[
\le
\|g\|
\int_0^t
\left|
(t-\xi)^{\rho_1 - 1}
E_{\rho',\rho_1}\!\left(
-\lambda_k (t-\xi)^{\rho_1},*
\right)
\right|
\, d\xi .
\]
Consequently,
\[
\left|
\int_0^t (t-\xi)^{\rho_1 - 1}
E_{\rho',\rho_1}\!\left(
-\lambda_k (t-\xi)^{\rho_1},*
\right)
g(\xi)\, d\xi
\right|^2
\]\[
\le \|g\|^2
\left(
\int_0^t
\left|
(t-\xi)^{\rho_1 - 1}
E_{\rho',\rho_1}\!\left(
-\lambda_k (t-\xi)^{\rho_1},*
\right)
\right|
\, d\xi
\right)^2.
\]
Using Lemmas \ref{lem2}, \ref{lem3}, and \ref{lem5} (lower bound), we obtain 
\[
\| D^\alpha u_n^2 \|_{C(\Omega)}^2
\le
C||g||^2 \sum_{k=1}^{\infty} \lambda_k^{2(\tau+1)} |\Psi_k|^2 \le C\cdot ||g||^2\cdot C_{\Psi} < \infty.\]

For $u_n^3(x,t)$, the following estimate holds:
\[
\| D^\alpha u_n^3 \|_{C(\Omega)}=
\left\|
D^\alpha \widehat{A}^{-\tau-1}
\sum_{k=1}^n \frac{\lambda_k^{\tau+1}\varphi_k \left[ 1 - \lambda_k t^{\rho_1} E_{ \rho', \rho_1 + 1} \left( -\lambda_k t^{\rho_1}, * \right) \right]b_{k,\rho_1}(t) v_k(x)}{b_{k,\rho_1}(t_0)}
\right\|_{C(\Omega)}
\]
\begin{equation}\label{5.19}
\le
C
\left\|
\sum_{k=1}^{n}\frac{\lambda_k^{\tau+1}\varphi_k \left[ 1 - \lambda_k t^{\rho_1} E_{ \rho', \rho_1 + 1} \left( -\lambda_k t^{\rho_1}, * \right) \right]b_{k,\rho_1}(t) v_k(x)}{b_{k,\rho_1}(t_0)}
\right\|_{L^2(\Omega)} .
\end{equation}
By the orthonormality of the system $v_k(x)$, we get
\[
\| D^\alpha u_n^3 \|_{C(\Omega)}^2
\le
C \sum_{k=1}^{n}
\left|
\frac{\lambda_k^{\tau+1}\varphi_k \left[ 1 - \lambda_k t^{\rho_1} E_{ \rho', \rho_1 + 1} \left( -\lambda_k t^{\rho_1}, * \right) \right]b_{k,\rho_1}(t)}{b_{k,\rho_1}(t_0)}
\right|^2.
\]
Apply Lemmas \ref{lem2}, \ref{lem3}, \ref{lem4}, and \ref{lem5} (lower bound), together with the assumption that $g(t)\in C[0,T]$, to get
\[
\| D^\alpha u_n^3 \|_{C(\Omega)}^2 \le
C||g||^2 \sum_{k=1}^{\infty} \lambda_k^{2(\tau+1)} |\varphi_k|^2 \le C\cdot ||g||^2\cdot C_{\varphi} < \infty.\]

This implies the uniform convergence of the series 
\eqref{5.14}--\eqref{5.16} after applying the spatial derivatives 
$D^\alpha$, $|\alpha|\le m$, in the closed cylinder 
$\overline{\Omega}\times (0,T]$.
On the other hand, the series \eqref{5.17}--\eqref{5.19} 
converge under any permutation of their terms, because these terms are mutually orthogonal. 
Therefore, the differentiated series 
\eqref{5.14}--\eqref{5.16} converge absolutely.

Now,  it remains to verify the validity of the term-by-term application of the operator
$\partial_t^{\rho_j}$ to the series \eqref{5.8} defining the solution $u(x,t)$.
It can be readily verified that 
\[
\sum_{k=1}^{n}\sum_{j=1}^M q_j \partial_t^{\rho_j}T_k(t) v_k(x)
=
- \sum_{k=1}^{n} \lambda_k T_k(t) v_k(x)
+g(t) \sum_{k=1}^{n} f_k v_k(x)=
\]
\[
=
- A(x,D)\,
\widehat{A}^{-\tau-1}
\sum_{k=1}^{n}
\lambda_k^{\tau+1} T_k(t) v_k(x)
+ g(t)\widehat{A}^{-\tau}
\sum_{k=1}^{n}
\lambda_k^{\tau} f_k v_k(x).
\]
The absolute and uniform convergence of the series on the right-hand side has already been proved above.

The fact that the functions \eqref{5.8} and \eqref{5.9} satisfy all conditions \eqref{1.1}--\eqref{1.4} of the inverse problem
is obvious.
\end{proof}

\begin{theorem}\label{theorem3}
    Let $\varphi \in D(\widehat{A}^{\tau+1})$ and $\Psi \in D(\widehat{A}^{\tau+1})$, where
$
\tau > \frac{N}{2m}$, and $g(t) \in C^1[0,T]$, \,\, $g(0) \neq 0$.\\
    1) If the set $K_{0,\rho_1}$ is empty i.e. $b_{k,\rho_1}(t_0)\neq 0$, for all $k$ then there exists solution of the inverse problem \eqref{1.1}--\eqref{1.4}:
    $$
    f(x)=\sum_{k=1}^{\infty}\frac{\Psi_k v_k(x)}{b_{k,\rho_1}(t_0)}-\sum_{k=1}^{\infty}\frac{\varphi_k \left[ 1 - \lambda_k t_0^{\rho_1} E_{ \rho', \rho_1 + 1} \left( -\lambda_k t_0^{\rho_1}, * \right) \right]v_k(x)}{b_{k,\rho_1}(t_0)},
    $$
    $$
   u(x,t)= \sum_{k=1}^{\infty}\varphi_k \left[ 1 - \lambda_k t^{\rho_1} E_{ \rho', \rho_1 + 1} \left( -\lambda_k t^{\rho_1}, * \right) \right]v_k(x)+\sum_{k=1}^{\infty}f_kb_{k,\rho_1}(t) v_k(x).
    $$
    2)If the set $K_{0,\rho_1}$ is not empty, for the existence of a solution to the inverse problem, it is necessary and sufficient that the following conditions
    \begin{equation}\label{5.20}
\Psi_k = \varphi_k \left[ 1 - \lambda_k t_0^{\rho_1} E_{ \rho', \rho_1 + 1} \left( -\lambda_k t_0^{\rho_1}, * \right) \right],
\quad k \in K_{0,\rho_1}.
\end{equation}
    be satisfied. In this case, the solution to problem \eqref{1.1}–\eqref{1.4} can be written in the form:
\[
f(x)=\sum_{k\in K_{\rho_1}}
\frac{\Psi_k -\varphi_k \left[ 1 - \lambda_k t_0^{\rho_1} E_{\rho', \rho_1 + 1} \left( -\lambda_k t_0^{\rho_1}, * \right) \right]}
{b_{k,\rho_1}(t_0)}\, v_k(x)
+\sum_{k\in K_{0,\rho_1}} f_k v_k(x),
\]
\[
u(x,t)
=
\sum_{k=1}^{\infty}
\left(\varphi_k
\left[
1-\lambda_k t^{\rho_1}
E_{\rho',\rho_1+1}\!\left(-\lambda_k t^{\rho_1},*\right)
\right]+ f_k b_{k,\rho_1}(t)\right) v_k(x),
\]
where $f_k$, $k \in K_{0,\rho_1}$, are arbitrary real numbers.
\end{theorem}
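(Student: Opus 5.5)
The proof follows the scheme of Theorem~\ref{theorem2}. The only change is that Lemma~\ref{lem5} is replaced by Lemma~\ref{lem6} together with Corollaries~\ref{cor1} and~\ref{cor2}. We work with $t_0\le r_0$, as required by Lemma~\ref{lem6}. By Corollary~\ref{cor2} the set $K_{0,\rho_1}$ is then finite. By Corollary~\ref{cor1} (for $k\ge k_0$ via Lemma~\ref{lem6}, and for the finitely many $k<k_0$ with $k\in K_{\rho_1}$ via a minimum over a finite set of positive numbers, as at the end of the proof of Lemma~\ref{lem5}) we get the uniform two-sided bound
\[
\frac{\widetilde C_0}{\lambda_k}\le |b_{k,\rho_1}(t_0)|\le \frac{\widetilde C_1}{\lambda_k},\qquad k\in K_{\rho_1}.
\]
The upper bound $|b_{k,\rho_1}(t)|\le C\|g\|/\lambda_k$, uniformly in $t\in[0,T]$, still follows from Lemmas~\ref{lem2}--\ref{lem4} exactly as before. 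Indeed, the kernel $\xi^{\rho_1-1}E_{\rho',\rho_1}$ is positive and its integral equals $t^{\rho_1}E_{\rho',\rho_1+1}(-\lambda_k t^{\rho_1},*)\le C/\lambda_k$, so the sign of $g$ plays no role here.

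\emph{Case 1)}: $K_{0,\rho_1}=\emptyset$. Every $b_{k,\rho_1}(t_0)\ne0$ and the lower bound holds for all $k$. The coefficients $f_k$ are then uniquely determined by \eqref{5.7}. We repeat verbatim the estimates \eqref{5.10}--\eqref{5.19}. Applying Lemma~\ref{lem1} with $\widehat A^{-\tau}$, $\tau>N/2m$, gives uniform and absolute convergence of the series for $f$. Applying it with $D^\alpha\widehat A^{-\tau-1}$, $|\alpha|\le m$, gives convergence of the three parts $u^1_n,u^2_n,u^3_n$ of $u$. In each case the factor $1/|b_{k,\rho_1}(t_0)|\le\lambda_k/\widetilde C_0$ is absorbed by one power of $\lambda_k$ from $\varphi,\Psi\in D(\widehat A^{\tau+1})$. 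The term-by-term application of $\sum_j q_j\partial_t^{\rho_j}$ is justified exactly as before, by expressing it through \eqref{5.2} as $-\lambda_kT_k+f_kg$.

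\emph{Case 2)}: $K_{0,\rho_1}\neq\emptyset$. \emph{Necessity}: if $(u,f)$ is a solution, its Fourier coefficients $T_k(t)=(u(\cdot,t),v_k)$ satisfy \eqref{5.2}--\eqref{5.4}. Hence \eqref{5.6} holds for every $k$. For $k\in K_{0,\rho_1}$ its left side vanishes, which is exactly \eqref{5.20}. \emph{Sufficiency}: assume \eqref{5.20}. For $k\in K_{\rho_1}$ define $f_k$ by \eqref{5.7}. For $k\in K_{0,\rho_1}$ take $f_k$ arbitrary; then \eqref{5.4} holds automatically, because of \eqref{5.20}. Since $K_{0,\rho_1}$ is finite, the arbitrary part $\sum_{k\in K_{0,\rho_1}}f_kv_k$ of $f$ and the corresponding part $\sum_{k\in K_{0,\rho_1}} f_kb_{k,\rho_1}(t)v_k$ of $u$ are finite sums. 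They are smooth because $v_k\in D(A)$ and $b_{k,\rho_1}\in C[0,T]$. Hence they satisfy \eqref{1.1}--\eqref{1.3} term by term. The infinite sums over $K_{\rho_1}$ are handled exactly as in Case 1, using the lower bound restricted to $K_{\rho_1}$.

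The main obstacle is the uniformity of the lower bound: Lemma~\ref{lem6} only gives it for $k\ge k_0$ and small $t_0$. I would handle this first, before anything else. I would check that the remainder $O((\lambda_kt_0^{\rho_1})^{-2})$ in Lemma~\ref{lem6} is genuinely $o(1/\lambda_k)$ once $t_0$ is fixed. I would also make sure that the finitely many indices $k<k_0$ in $K_{\rho_1}$ contribute a positive minimum of $\lambda_k|b_{k,\rho_1}(t_0)|$. Everything else is a repetition of the proof of Theorem~\ref{theorem2}.
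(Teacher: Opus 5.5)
Your proposal is correct and is essentially the paper's proof. Part 1 reruns the argument of Theorem~\ref{theorem2} with Lemma~\ref{lem6} and Corollaries~\ref{cor1}--\ref{cor2} in place of Lemma~\ref{lem5}; your explicit restriction $t_0\le r_0$ and your finite-minimum treatment of the indices $k<k_0$ in $K_{\rho_1}$ make precise what the paper leaves implicit. Part 2 is exactly the paper's observation that \eqref{5.6} is solvable for $k\in K_{0,\rho_1}$ if and only if \eqref{5.20} holds, with $f_k$ then free and $K_{0,\rho_1}$ finite.

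One remark applies equally to the paper. The smallness of $t_0$ used in Lemma~\ref{lem6} (essentially $t_0\max|g'|<|g(0)|$) already makes $g$ non-vanishing on $[0,t_0]$. By Lemma~\ref{lem4}, every $b_{k,\rho_1}(t_0)\neq 0$, so $K_{0,\rho_1}=\emptyset$ and part 2 is vacuous in the regime where this argument applies.
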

\begin{proof}
The proof of the first part of the theorem is completely analogous to the proof of Theorem
\ref{theorem2}. As regards the proof of the second part of the theorem, we note the following:

If $k \in K_{\rho_1}$, then \eqref{5.5} and \eqref{5.7} follow directly from \eqref{5.6}.

On the other hand, if $k \in K_{0,\rho_1}$, i.e., $b_{k,\rho_1}(t_0) = 0$, equation \eqref{5.6} admits a solution for $f_k$ if and only if condition \eqref{5.20} is satisfied. In this case, the corresponding values of $f_k$ can be chosen arbitrarily.

Moreover, as noted in Corollaries \eqref{cor1} and \eqref{cor2}, the set $K_{0,\rho_1}$ contains only finitely many elements.
\end{proof}

\begin{remark}
    For conditions \eqref{5.20} to be satisfied, it suffices that the following orthogonality
conditions hold:
\[
\varphi_k = (\varphi, v_k) = 0, 
\qquad 
\Psi_k = (\Psi, v_k) = 0, 
\qquad 
k \in K_{0,\rho_1}.
\]
\end{remark}

\section{Uniqueness}\label{sec:6}

\begin{theorem}
Let the functions $\varphi(x)$ and $\Psi(x)$ be continuous in the closed domain $\Omega$, and let $K_{0,\rho_1}$ be empty.
Then the problem \eqref{1.1}--\eqref{1.4} admits at most one classical solution $(u(x,t),f(x))$.
\end{theorem}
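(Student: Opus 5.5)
Suppose $(u_1,f_1)$ and $(u_2,f_2)$ are two classical solutions of \eqref{1.1}--\eqref{1.4}. I will set $u=u_1-u_2$ and $f=f_1-f_2$. By linearity, the pair $(u,f)$ is a classical solution of the homogeneous problem: \eqref{1.1} holds with right-hand side $f(x)g(t)$, \eqref{1.2} holds with $\varphi\equiv0$, and \eqref{1.4} holds with $\Psi\equiv0$. The goal is to show $u\equiv0$ and $f\equiv0$. I will do this by projecting onto the complete orthonormal system $\{v_k\}$. Set
\[
T_k(t)=(u(\cdot,t),v_k),\qquad f_k=(f,v_k).
\]

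First I will derive the Cauchy problem for $T_k$. I take the $L_2(\Omega)$ inner product of \eqref{1.1} with $v_k$. Since $u(\cdot,t)$ is classical, it lies in $C^m$ and satisfies the boundary conditions \eqref{1.3}. Formal self-adjointness of $A$ together with \eqref{2.1}--\eqref{2.2} then gives
\[
(A(x,D)u,v_k)=(u,A(x,D)v_k)=\lambda_kT_k(t).
\]
For the fractional term I must justify
\[
(\partial_t^{\rho_j}u,v_k)=\partial_t^{\rho_j}T_k .
\]
This interchanges the spatial integral with the time integral in the Caputo definition. I will use Fubini's theorem, relying on the regularity in the definition of a classical solution (with $\partial_t u$ continuous for $t>0$ and $u$ continuous on $\overline\Omega\times[0,T]$). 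This yields \eqref{5.2}--\eqref{5.3} with $\varphi_k=0$:
\[
\sum_{j=1}^M q_j\partial_t^{\rho_j}T_k+\lambda_kT_k=f_kg(t),\qquad T_k(0)=0 .
\]

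Next I will solve for $T_k$ and use the overdetermination condition. By uniqueness of the solution of this linear multi-term Cauchy problem (the representation \eqref{5.5} from \cite{11}), $T_k(t)=f_k\,b_{k,\rho_1}(t)$. Condition \eqref{1.4} with $\Psi=0$ gives $T_k(t_0)=0$, that is,
\[
f_k\,b_{k,\rho_1}(t_0)=0 .
\]
Because $K_{0,\rho_1}$ is empty, $b_{k,\rho_1}(t_0)\neq0$ for every $k$. Hence $f_k=0$ for all $k$, and consequently $T_k\equiv0$ for all $k$.

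Finally, completeness of $\{v_k\}$ in $L_2(\Omega)$ gives $f=0$ and $u(\cdot,t)=0$ in $L_2$ for each $t$. Since $f$ and $u$ are continuous, they vanish identically. The continuity of $\varphi$ and $\Psi$ in the hypothesis serves to make the coefficients and the initial and overdetermination conditions meaningful.

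The main obstacle is the interchange step, namely justifying that the inner product with $v_k$ commutes with the Caputo derivative, and invoking uniqueness for the multi-term fractional ODE. For the latter I would cite the known uniqueness for \eqref{5.2}--\eqref{5.3} (e.g.\ via Laplace transform, or via the maximum principle of \cite{13}). If needed, I would instead note that the zero-data problem has only the trivial solution, because the Laplace transform gives
\[
\Bigl(\sum_j q_js^{\rho_j}+\lambda_k\Bigr)\hat T_k=0 .
\]
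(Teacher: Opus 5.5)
Your proposal is correct and is essentially the paper's proof: you project the difference $u$ onto each $v_k$, identify the projection as $f_k b_{k,\rho_1}(t)$ via \eqref{5.5}, use $b_{k,\rho_1}(t_0)\neq 0$ to force $f_k=0$, and conclude by completeness of $\{v_k\}$. The paper takes for granted the Fubini interchange and the uniqueness for the scalar Cauchy problem, which you make explicit; for the uniqueness, on the finite interval $(0,T]$ it is cleaner to pass to the equivalent weakly singular Volterra integral equation than to use the Laplace transform.
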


\begin{proof}
Assume that, under the conditions of the theorem, there exist two pairs of solutions,
$(u_1(x,t),f_1(x))$ and $(u_2(x,t),f_2(x))$.
We prove that
\[
u(x,t)=u_1(x,t)-u_2(x,t)\equiv 0,
\qquad
f(x)=f_1(x)-f_2(x)\equiv 0.
\]

Since the problem under consideration is linear, the pair $(u(x,t),f(x))$ satisfies the following problem:
\begin{equation}\label{6.1}
\sum_{j=1}^M q_j \partial_t^{\rho_j} u(x,t) + A(x,D)u(x,t) = f(x)g(t),
\quad x \in \Omega,\; 0 < t \le T,
\end{equation}
\begin{equation}\label{6.2}
u(x,0) = 0,
\quad x \in \Omega,
\end{equation}
\begin{equation}\label{6.3}
B_j u(x,t) = \sum_{|\alpha|\le m_j} b_{\alpha,j}(x) D^\alpha u(x,t) = 0,
\end{equation}
\[
0 \le m_j \le m-1,\quad j=1,\dots,l,\quad x \in \partial\Omega,\; 0 < t \le T,
\]
\begin{equation}\label{6.4}
u(x,t_0)=0,
\quad x \in \Omega.
\end{equation}

Let $v_k(x)$ be an arbitrary eigenfunction of the spectral problem \eqref{2.1}--\eqref{2.2}
corresponding to the eigenvalue $\lambda_k$.
Consider the function
\begin{equation}\label{6.5}
w_k(t)=\int_{\Omega} u(x,t)v_k(x)\,dx.
\end{equation}

From \eqref{6.1}, we obtain
\[
\sum_{j=1}^M q_j \partial_t^{\rho_j} w_k(t)
=
\sum_{j=1}^M q_j \int_{\Omega} \partial_t^{\rho_j} u(x,t)v_k(x)\,dx
\]
\[
=
- \int_{\Omega} A(x,D)u(x,t)v_k(x)\,dx
+ g(t)\int_{\Omega} f(x)v_k(x)\,dx.
\]

Using the definition of eigenfunctions, we see that
\[
\sum_{j=1}^M q_j \partial_t^{\rho_j} w_k(t)
=
- \int_{\Omega} u(x,t)A(x,D)v_k(x)\,dx + f_k g(t)
=
- \lambda_k w_k(t) + f_k g(t).
\]

Consequently, $w_k(t)$ satisfies the Cauchy-type problem
\[
\sum_{j=1}^M q_j \partial_t^{\rho_j} w_k(t) + \lambda_k w_k(t)
=
f_k g(t),
\qquad
w_k(0)=0,
\]
\begin{equation}\label{6.6}
w_k(t_0)=0.
\end{equation}
By \eqref{5.5}, the solution of this problem has the form
\[
w_k(t)=f_k b_{k,\rho_1}(t).
\]
Using the additional condition \eqref{6.6}, we obtain
\begin{equation}\label{6.7}
w_k(t_0)=f_k b_{k,\rho_1}(t_0)=0.
\end{equation}
Since $K_{0,\rho_1}$ is empty, we have
\[
b_{k,\rho_1}(t_0)\neq 0
\quad \text{for all } k.
\]
Therefore, $f_k=0$ for all $k$.
Hence, $w_k(t)\equiv 0$ for all $k$.
By the completeness of the system of eigenfunctions $v_k$, it follows that
\[
f(x)\equiv 0,
\qquad
u(x,t)\equiv 0.
\]
This proves the uniqueness of the solution to the inverse problem \eqref{1.1}--\eqref{1.4}.
\end{proof}

\begin{remark}
If $K_{0,\rho_1}$ is not empty, the solution is not unique. Indeed, if $b_{k,\rho_1}(t_0)=0$ for some $k$, then relation \eqref{6.7} is satisfied for arbitrary values of $f_k$.
Hence, the inverse problem admits infinitely many solutions.
\end{remark}

\section{Special case of the operator $A(x,D)$}
\label{sec:7}
Paper \cite{27} studies general self-adjoint extensions in $L^2(\Omega)$ of elliptic operators
initially defined on $C_0^\infty(\Omega)$. The self-adjoint operator $\widehat{A}$ used in this
work belongs to this class. In particular, it follows from \cite{27} that functions from the
Sobolev space $L^{\tau m}_2(\Omega)$ with compact support in $\Omega$ are contained in the
domain of the  $\widehat{A}^\tau$.

As an application of Theorem \ref{theorem2} and Theorem \ref{theorem3} , we consider the inverse problem on the torus
$\mathbb{T}^N$,\quad $\mathbb{T}^N=(-\pi,\pi]^N$. Let
\[
A(D)=\sum_{|\alpha|=m} a_\alpha D^\alpha,
\]
be a homogeneous elliptic, symmetric, and positive operator with constant coefficients. We
study the multi-term time-fractional inverse problem
\begin{equation}\label{7.1}
\sum_{j=1}^M q_j  \partial_t^{\rho_j} u(x,t) + A(D)u(x,t) = f(x)g(t),
\quad (x,t) \in \mathbb{T}^N \times (0,T],
\end{equation}
\begin{equation}\label{7.2}
u(x,0) = \varphi(x), 
\quad x\in\mathbb{T}^N,
\end{equation}
\begin{equation}\label{7.3}
u(x,t_0) = \Psi(x), 
\quad x\in\mathbb{T}^N.
\end{equation}

All functions involved are assumed to be $2\pi$-periodic in each spatial variable. The closure
$\widehat{A}$ of $A(D)$ in $L^2(\mathbb{T}^N)$ is self-adjoint and admits the orthonormal
eigenfunctions
\[
(2\pi)^{-N/2} e^{i n\cdot x}, \quad n\in\mathbb{Z}^N,
\]
with eigenvalues $A(n)$. By the spectral theorem, for $\tau\ge0$, 
\[
\widehat{A}^\tau h(x)=\sum_{n\in\mathbb{Z}^N} A^\tau(n)\, h_n e^{i n\cdot x},
\]
and its domain is
\begin{equation}\label{7.4}
D(\widehat{A}^\tau)=
\left\{
h\in L^2(\mathbb{T}^N):
\sum_{n\in\mathbb{Z}^N} A^{2\tau}(n)\, |h_n|^2<\infty
\right\}.
\end{equation}
To define this domain in terms of Sobolev spaces, recall that a function $h\in L^2(\mathbb{T}^N)$ belongs to the Sobolev space $L^a_2(\mathbb{T}^N)$ \cite{28}, for a real $a>0$, if and only if 
\begin{equation}\label{7.5}
\|h\|_{L^a_2(\mathbb{T}^N)}^2
=
\sum_{n\in \mathbb{Z}^N} (1 + |n|^2)^{a} |h_n|^2 < \infty,
\end{equation}
where $h_n$ are the Fourier coefficients of $h$ with respect to the orthonormal eigenfunctions
$(2\pi)^{-N/2} e^{i n \cdot x}$ in $L^2(\mathbb{T}^N)$. 
It can readily be verified that there exist constants $c_1$ and $c_2$ such that one has the estimates
\[
c_1 \left( 1 + |n|^2 \right)^{\tau m}
\le
1 + A^{2 \tau} (n)
\le
c_2 \left( 1 + |n|^2 \right)^{\tau m}.
\]
Consequently, by comparing the expressions \eqref{7.4} and \eqref{7.5}, we conclude that 
$$
D(\widehat{A}^\tau) = L^{\tau m}_2(\mathbb{T}^N).
$$
\begin{theorem}\label{theorem5}
Let $\varphi, \Psi \in L_2^{\tau +m}(\mathbb{T}^N)$ where $\tau>\frac{N}{2}$ and $g\in C[0,T], \, g(t)\neq 0$.
Then there exists a classical solution $(u(x,t), f(x))$ of the inverse problem \eqref{7.1}--\eqref{7.3}, which can be represented in the following form      
$$
    f(x)=\sum_{n\in \mathbb{Z}^N} \frac{\left(\Psi_n-\varphi_n \left[ 1 - A(n) t_0^{\rho_1} E_{ \rho', \rho_1 + 1} \left( -A(n) t_0^{\rho_1}, * \right) \right]\right)e^{inx}  }{b_{n,\rho_1}(t_0)},
    $$
    $$
    u(x,t)= \sum_{n\in \mathbb{Z}^N}\varphi_n \left[ 1 - A(n) t^{\rho_1} E_{ \rho', \rho_1 + 1} \left( -A(n) t^{\rho_1}, * \right) \right]e^{inx}+\sum_{n\in\mathbb{Z}^N}f_
    n b_{n,\rho_1}(t) e^{inx}.
    $$
\end{theorem}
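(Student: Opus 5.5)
The plan is to derive Theorem \ref{theorem5} as a direct specialization of Theorem \ref{theorem2}. We take $\Omega=\mathbb{T}^N$, $A(x,D)=A(D)$, and no boundary operators $B_j$; periodicity replaces the boundary conditions. The eigenpairs are $(2\pi)^{-N/2}e^{in\cdot x}$ and $A(n)$, indexed by $n\in\mathbb{Z}^N$ instead of $k\in\mathbb{N}$. The normalizing constant $(2\pi)^{-N/2}$ is absorbed into the Fourier coefficients $\varphi_n,\Psi_n,f_n$, which yields the stated form of the series. The first step is to translate the hypotheses. By the identification $D(\widehat{A}^\tau)=L_2^{\tau m}(\mathbb{T}^N)$ established just before the statement, membership $\varphi,\Psi\in D(\widehat{A}^{\sigma+1})$ is equivalent to $\varphi,\Psi\in L_2^{(\sigma+1)m}(\mathbb{T}^N)$. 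We set $\sigma=\tau/m$. Then $\tau>N/2$ is exactly $\sigma>N/(2m)$, which is the condition in Theorem \ref{theorem2}.

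The second step is to check that the Sobolev index in the statement is sufficient. As written, the regularity is $L_2^{\tau+m}$, while Theorem \ref{theorem2} requires $L_2^{\sigma m+m}=L_2^{\tau+m}$. These agree, so the hypothesis $\varphi,\Psi\in L_2^{\tau+m}(\mathbb{T}^N)$ with $\tau>N/2$ gives $\varphi,\Psi\in D(\widehat{A}^{\sigma+1})$ with $\sigma>N/(2m)$.

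The third step is to verify the auxiliary ingredients on the torus.

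Lemma \ref{lem1} is needed with $\sigma>|\alpha|/m+N/(2m)$. On $\mathbb{T}^N$ it follows directly:
\[
|D^\alpha\widehat{A}^{-\sigma}h|\le C\sum_n |n|^{|\alpha|}A(n)^{-\sigma}|h_n|,
\]
and Cauchy--Schwarz together with $A(n)\asymp|n|^m$ and $\sum_{n\ne0}|n|^{2|\alpha|-2\sigma m}<\infty$ bound the right-hand side by $C\|h\|_{L^2}$. Lemmas \ref{lem2}--\ref{lem4} and Lemma \ref{lem5} concern only the scalar functions $b_{n,\rho_1}(t_0)$ and the Mittag--Leffler factors with $\lambda=A(n)$, so they carry over verbatim. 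The hypothesis $g\in C[0,T]$, $g\ne0$, gives the two-sided estimate $|b_{n,\rho_1}(t_0)|\asymp A(n)^{-1}$ from Lemma \ref{lem5}.

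The main obstacle is that positivity of $\widehat{A}$ fails: $A(0)=0$ for a homogeneous operator, so $n=0$ gives a zero eigenvalue and $\widehat{A}^{-\sigma}$ is undefined on constants. I would isolate the $n=0$ mode and treat it by hand. For this mode the scalar problem is $\sum_j q_j\partial_t^{\rho_j}T_0=f_0g(t)$ with $T_0(0)=\varphi_0$. Its solution is $T_0(t)=\varphi_0+f_0\,b_{0,\rho_1}(t)$, where
\[
b_{0,\rho_1}(t)=\int_0^t g(t-\xi)\,\xi^{\rho_1-1}E_{\rho',\rho_1}(0,*)\,d\xi .
\]
Since $\xi^{\rho_1-1}E_{\rho',\rho_1}(0,*)$ is positive by Lemma \ref{lem4}, which remains valid at $\lambda=0$, and $g$ does not change sign, we get $b_{0,\rho_1}(t_0)\ne0$. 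Hence $f_0=(\Psi_0-\varphi_0)/b_{0,\rho_1}(t_0)$ is well defined, consistent with the formula at $A(0)=0$.

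For $n\ne0$ we have $A(n)\ge c|n|^m\ge c>0$, and the convergence estimates in the proof of Theorem \ref{theorem2} go through unchanged. This establishes uniform and absolute convergence of the series for $f$, and of the series for $u$ after applying $D^\alpha$ with $|\alpha|\le m$ and after applying $\partial_t^{\rho_j}$ through the equation. Adding the single smooth $n=0$ term does not affect convergence, which completes the existence of the classical solution in the stated form.
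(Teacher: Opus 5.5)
Your proposal is correct and follows the paper's route: the paper also obtains Theorem \ref{theorem5} by specializing Theorem \ref{theorem2} via $D(\widehat{A}^\tau)=L_2^{\tau m}(\mathbb{T}^N)$, and your rescaling $\sigma=\tau/m$ is exactly what matches the two hypotheses. The only difference is the zero eigenvalue $A(0)=0$: the paper simply passes to $A+I$ so that Lemma \ref{lem1} applies, while you prove the embedding directly for $n\neq 0$ and treat the $n=0$ mode by hand, which makes explicit the fact $b_{0,\rho_1}(t_0)\neq 0$ (from positivity of the $\lambda=0$ kernel and the sign-definiteness of $g$) that the paper's shift leaves implicit.
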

\begin{theorem}\label{theorem6}
    Let $\varphi, \Psi \in L_2^{\tau +m}(\mathbb{T}^N)$ where $\tau>\frac{N}{2}$ and $g\in C^1[0,T], \, g(0)\neq 0,$\\
    1) If the set $K_{0,\rho_1}$ is empty i.e. $b_{n,\rho_1}\neq 0$, for all $n$ then there exists a unique solution of the inverse problem \eqref{7.1}--\eqref{7.3}:
    $$
    f(x)=\sum_{n\in \mathbb{Z}^N} \frac{\left(\Psi_n-\varphi_n \left[ 1 - A(n) t_0^{\rho_1} E_{ \rho', \rho_1 + 1} \left( -A(n) t_0^{\rho_1}, * \right) \right]\right)e^{inx}  }{b_{n,\rho_1}(t_0)},
    $$
    $$
    u(x,t)= \sum_{n\in \mathbb{Z}^N}\varphi_n \left[ 1 - A(n) t^{\rho_1} E_{ \rho', \rho_1 + 1} \left( -A(n) t^{\rho_1}, * \right) \right]e^{inx}+\sum_{n\in\mathbb{Z}^N}f_
    n b_{n,\rho_1}(t) e^{inx}.
    $$
    2)If the set $K_{0,\rho_1}$ is not empty, for the existence of a solution to the inverse problem, it is necessary and sufficient that the following conditions
    $$
    \Psi_n=\varphi_n \left[ 1 - A(n) t_0^{\rho_1} E_{ \rho', \rho_1 + 1} \left( -A(n) t_0^{\rho_1}, * \right) \right],
\quad\quad n\in K_{0,\rho_1},    $$
    be satisfied. In this case, the solution to the problem \eqref{7.1}--\eqref{7.3}  exists but it is not unique: 
 $$
    f(x)=\sum_{n\in K_{\rho_1}} \frac{\left(\Psi_n-\varphi_n \left[ 1 - A(n) t_0^{\rho_1} E_{ \rho', \rho_1 + 1} \left( -A(n) t_0^{\rho_1}, * \right) \right]\right)e^{inx}  }{b_{n,\rho_1}(t_0)}+\sum_{n\in K_{0,\rho_1}}f_ne^{inx},
 $$
 $$
    u(x,t)= \sum_{n\in \mathbb{Z}^N}\left(\varphi_n \left[ 1 - A(n) t^{\rho_1} E_{ \rho', \rho_1 + 1} \left( -A(n) t^{\rho_1}, * \right) \right] +f_n b_{n,\rho_1}(t)\right)e^{inx},
    $$
    where $f_n, n\in K_{0,\rho_1}$ are arbitrary constants.
\end{theorem}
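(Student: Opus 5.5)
The statement to prove is Theorem \ref{theorem6}, the torus analogue of Theorem \ref{theorem3}. I would carry over that proof, replacing the abstract eigenfunctions by $(2\pi)^{-N/2}e^{in\cdot x}$, $n\in\mathbb{Z}^N$, and the eigenvalues by $A(n)$. The ingredients are as follows.

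\textbf{Step 1: function spaces and embedding.} By the identification $D(\widehat{A}^\tau)=L_2^{\tau m}(\mathbb{T}^N)$ established just before Theorem \ref{theorem5}, the hypothesis $\varphi,\Psi\in L_2^{\tau+m}(\mathbb{T}^N)$ gives
\[
\sum_n A(n)^{2(\tau'+1)}\bigl(|\varphi_n|^2+|\Psi_n|^2\bigr)<\infty, \qquad \tau'=\tau/m>\frac{N}{2m}.
\]
This is exactly the hypothesis of Theorem \ref{theorem3}. The torus version of Lemma \ref{lem1} can be checked directly by Cauchy--Schwarz: $\sum_n |n|^{|\alpha|}|h_n|\le \bigl(\sum_n A(n)^{2\sigma}|h_n|^2\bigr)^{1/2}\bigl(\sum_n |n|^{2|\alpha|}A(n)^{-2\sigma}\bigr)^{1/2}$. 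The last sum is finite when $\sigma m-|\alpha|>N/2$, because $A(n)\ge c|n|^m$.

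One caveat concerns $n=0$, where $A(0)=0$. Then $T_0$ solves $\sum_j q_j\partial_t^{\rho_j}T_0=f_0 g$, and $b_{0,\rho_1}(t_0)$ is the corresponding convolution integral, which is well defined. Since $n=0$ is a single index, it is handled separately as a finite mode.

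\textbf{Step 2: lower bound on the denominators.} Since $A(n)\to\infty$ as $|n|\to\infty$, Lemma \ref{lem6} applies with $\lambda_k$ replaced by $A(n)$. Hence $|b_{n,\rho_1}(t_0)|\ge \widetilde C_0/A(n)$ for $t_0\le r_0$ and $|n|$ large. Only finitely many $n$ remain. In part 1) these all satisfy $b_{n,\rho_1}(t_0)\neq0$, so taking the minimum over them gives a uniform bound $|b_{n,\rho_1}(t_0)|\ge C/(1+A(n))$ for all $n$. This is the same argument as in Lemma \ref{lem5} and Corollary \ref{cor1}.

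\textbf{Step 3: existence in part 1).} The convergence estimates from the proof of Theorem \ref{theorem2} then go through verbatim for the series defining $f$ and for $D^\alpha u$ with $|\alpha|\le m$. They use Lemma \ref{lem2} for the bracket $1-A(n)t^{\rho_1}E_{\rho',\rho_1+1}$ and the bound $|b_{n,\rho_1}(t)|\le C\|g\|/A(n)$. The extra factor $A(n)$ coming from the denominator is absorbed by the $+1$ in the exponent $\tau'+1$. The fractional derivatives in $t$ are recovered from the equation, exactly as in the proof of Theorem \ref{theorem2}.

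\textbf{Step 4: uniqueness in part 1).} I would use the argument of the uniqueness theorem in Section \ref{sec:6}. Set $w_n(t)=\int u\,e^{-in\cdot x}dx$. Periodicity replaces the boundary conditions when integrating by parts, since $A(D)$ is symmetric. This gives $f_n b_{n,\rho_1}(t_0)=0$, hence $f_n=0$ because $K_{0,\rho_1}$ is empty. Completeness of the exponentials then yields $u\equiv0$ and $f\equiv0$.

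\textbf{Step 5: part 2).} Here I follow the proof of Theorem \ref{theorem3}. For $n\in K_{0,\rho_1}$, the relation \eqref{5.6} is solvable if and only if the stated compatibility condition holds, and then $f_n$ is arbitrary. Corollary \ref{cor2} shows that $K_{0,\rho_1}$ is finite. Therefore the arbitrary terms form a trigonometric polynomial, which does not affect convergence, and the remaining series are treated as in Step 3.

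\textbf{Main obstacle.} The key step is the uniform lower bound in Step 2. Lemma \ref{lem6} only gives it for small $t_0$ and large $|n|$, so I will state the result under the implicit assumption $t_0\le r_0$ inherited from Lemma \ref{lem6}. The finitely many remaining $n$, including $n=0$, are then handled by nonvanishing in part 1) and by the compatibility conditions in part 2).
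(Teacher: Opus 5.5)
Your proposal is correct and follows the paper's route. The paper simply transfers Theorems \ref{theorem2} and \ref{theorem3} to the torus, replacing $A$ by $A+I$ to handle the zero eigenvalue $A(0)=0$ in Lemma \ref{lem1}; your direct Cauchy--Schwarz embedding, bounds in $1+A(n)$, separate $n=0$ mode and Step 4 uniqueness via Section \ref{sec:6} cover the same ground. Your explicit $t_0\le r_0$ is also silently used by the paper (via Corollaries \ref{cor1} and \ref{cor2}), and some such restriction is needed: for $g(t)=1-t/t_0$ one gets $K_{0,\rho_1}=\emptyset$ but $b_{n,\rho_1}(t_0)\asymp A(n)^{-2}$ as $|n|\to\infty$, so your Step 2 bound fails and existence can break down for data in $L_2^{\tau+m}(\mathbb{T}^N)$ with $\tau<m$.
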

The proofs of the Theorems \ref{theorem5} and \ref{theorem6} are completely analogous to the proof of the Theorems \ref{theorem2} and \ref{theorem3}. However, it should be emphasized that in Lemma \ref{lem1} the operator $A$ is positive,
whereas the operator considered in problem \eqref{7.1} is non-negative.
Therefore, in the proofs of Theorems \ref{theorem5} and \ref{theorem6}, we employ the operator $A + I$,
where $I$ is the identity operator in $L_2(\Omega)$.

\section{Conclusion}\label{sec:8}

In this paper, we investigated an inverse source problem for a multi-term time-fractional diffusion equation with the Caputo derivatives. The source term was assumed in separable form $f(x)g(t)$, and the unknown spatial component was recovered from an additional over-determination condition imposed at an interior time.

The elliptic part of the equation is described by a general 
self-adjoint positive operator $A(x,D)$ of order $m \ge 2$.  By means of the spectral decomposition associated with the operator, the inverse problem was reduced to a sequence of multi-term fractional ordinary differential equations. Their solutions were expressed in terms of multinomial Mittag--Leffler functions.

A detailed asymptotic analysis of the corresponding multinomial Mittag--Leffler functions was carried out. Sharp asymptotic estimates were established and used to derive uniform lower bounds for the spectral denominator appearing in the reconstruction formula. These estimates play a crucial role in ensuring well-posedness of the inverse problem in the non-degenerate case.

Under appropriate smoothness assumptions on the given data, sufficient conditions guaranteeing the existence of a classical solution were obtained. The uniform and absolute convergence of the associated Fourier series was rigorously justified. Furthermore, uniqueness was proved under the condition that the set of vanishing spectral denominators is empty. In the degenerate case, necessary and sufficient compatibility conditions were derived, and the structure of the non-uniqueness was completely characterized.

The abstract results were further specified for homogeneous elliptic operators with constant coefficients on the torus, where the domain of fractional powers of the operator was explicitly identified with Sobolev spaces. This example illustrates the applicability and structural clarity of the developed spectral approach.

The obtained results provide a comprehensive analytical framework for multi-term time-fractional inverse source problems and extend existing well-posedness theory to a broad class of elliptic operators.

\section*{Acknowledgements}
This research no external funding.

\end{document}